\newcommand{\RR}{\mathbb{R}}
\newcommand{\EE}{\mathbb{E}}
\newcommand{\epi}{\mathrm{epi\ }}
\newcommand{\hypo}{\mathrm{hypo\ }}
\newcommand{\graph}{\mathrm{graph\ }}
\newcommand{\dom}{\mathrm{dom\ }}
\newcommand{\proj}{\mathrm{proj}}
\newcommand{\dist}{\mathrm{dist}}
\newcommand{\interior}{\mathrm{int\ }}
\newcommand{\closure}{\mathrm{cl\ }}
\DeclareMathOperator*{\argmin}{\arg\!\min}
\mathchardef\mhyphen="2D 
\DeclareMathOperator*{\argmax}{\arg\!\max}
\newcommand{\varSpace}{\mathcal{E}}
\newcommand{\extPos}{\overline{\mathbb{R}}_{++}}
\newcommand{\radTransSup}[1]{#1^{\Gamma}}
\newcommand{\biradTransSup}[1]{#1^{\Gamma\Gamma}}
\newcommand{\radTransSet}[1]{\Gamma#1}
\definecolor{blue}{gray}{0.0}
\begin{document}
	
	\title{Radial Duality\\ Part II: Applications and Algorithms\thanks{This material is based upon work supported by the National Science Foundation Graduate Research Fellowship under Grant No. DGE-1650441. This work was partially done while the author was visiting the Simons Institute for the Theory of Computing. It was partially supported by the DIMACS/Simons Collaboration on Bridging Continuous and Discrete Optimization through NSF grant \#CCF-1740425.}
	}
	
	
	\author{Benjamin Grimmer
	}
	
	
	\institute{B. Grimmer \at
		Johns Hopkins University, Baltimore, MD.
		\email{grimmer@jhu.edu}
	}
	
	\date{Received: date / Accepted: date}

	\maketitle
	
	\begin{abstract}
		The first part of this work established the foundations of a radial duality between nonnegative optimization problems, inspired by the work of Renegar~\cite{Renegar2016}. Here we utilize our radial duality theory to design and analyze projection-free optimization algorithms that operate by solving a radially dual problem. In particular, we consider radial subgradient, smoothing, and accelerated methods that are capable of solving a range of constrained convex and nonconvex optimization problems and that can scale-up more efficiently than their classic counterparts. These algorithms enjoy the same benefits as their predecessors, avoiding Lipschitz continuity assumptions and costly orthogonal projections, in our newfound, broader context. Our radial duality further allows us to understand the effects and benefits of smoothness and growth conditions on the radial dual and consequently on our radial algorithms.
		\keywords{Optimization \and Projection-free Methods \and Convex \and Nonconvex \and Nonsmooth \and First-Order Methods \and Projective Transformations}
	\end{abstract}
	
	\section{Introduction}\label{sec:intro}
The first part of this work~\cite{Grimmer2021-part1} established a theory of radial duality relating nonnegative optimization problems through a projective transformation, extending the ideas of Renegar~\cite{Renegar2016} from their origins in conic programming. We give a minimal overview here of our radial duality theory needed to begin algorithmically benefiting from it and then a fuller but terse summary in Section~\ref{sec:review} necessary to derive our radial optimization guarantees.

For a finite dimensional Euclidean space $\varSpace$, our three transformations of interest are the radial point transformation, radial set transformation, and upper radial function transformation, which are denoted by 
$$\Gamma(x,u) = (x,1)/u,$$
$$\Gamma S = \{\Gamma(x,u) \mid (x,u)\in S\},$$
$$\radTransSup{f}(y) = \sup\{v>0 \mid (y,v) \in \radTransSet{(\epi f)}\}$$
for any point $(x,u)\in\varSpace\times\RR_{++}$, set $S\subseteq\varSpace\times\RR_{++}$, and function $f \colon \varSpace\rightarrow \extPos$, respectively. Here $\extPos$ denotes the extended positive reals $\RR_{++}\cup\{0,+\infty\}$. It is immediate that the point and set transformations are dual since
$$ \Gamma\Gamma(x,u) = \Gamma \frac{(x,1)}{u} = \frac{(x/u,1)}{1/u} = (x,u).$$

Central to establishing our theory of radial duality is the characterization of exactly when this duality carries over to the function transformation. We say a function $f$ is {\it upper radial} if the perspective function $f^p(y,v) = v\cdot f(y/v)$ is upper semicontinuous and nondecreasing in $v\in\RR_{++}$. Moreover, it is {\it strictly upper radial} if it is strictly increasing in $v$ whenever $f^p(y,v)\in\RR_{++}$. The cornerstone theorem of our radial duality~\cite[Theorem 1]{Grimmer2021-part1} is that
\begin{equation} \label{eq:function-duality}
f = \biradTransSup{f} \iff f \text{ is upper radial.}
\end{equation}

The duality of the radial function transformation provides a duality between optimization problems {\color{blue} (see Section 4 of~\cite{Grimmer2021-part1})}.
For any strictly upper radial function $f\colon \varSpace\rightarrow \extPos$, consider the primal problem
\begin{equation}
p^* = \max_{x\in\varSpace} f(x). \label{eq:base-problem}
\end{equation}
Then the radially dual problem is given by
\begin{equation}
d^* = \min_{y\in\varSpace} \radTransSup{f}(y) \label{eq:radial-problem}
\end{equation}
and has $(\argmax f)\times\{p^*\}= \Gamma\left((\argmin \radTransSup{f})\times\{d^*\}\right)$. Thus maximizing $f$ is equivalent to minimizing $\radTransSup{f}$ and solutions can be converted between these problems by applying the radial point transformation $\Gamma$ or its inverse (which is also $\Gamma$ by duality). 

Importantly, the two nonnegative optimization problems~\eqref{eq:base-problem} and~\eqref{eq:radial-problem} can exhibit very different structural properties. For example, consider maximizing $f(x) = \sqrt{1 - \|x\|_2^2}_+$ which takes value zero outside the unit ball and has arbitrarily large gradients and Hessians as $x$ approaches the boundary of this ball. Its radial dual $\radTransSup{f}(y) = \sqrt{1+\|y\|_2^2}$ has a full domain with gradients and Hessians bounded in norm by one everywhere. Thus our radial duality theory poses an opportunity to extend the reach of many standard optimization algorithms reliant on such structure.
The previous works of Renegar~\cite{Renegar2016} and Grimmer~\cite{Grimmer2017-radial-subgradient} analyzing subgradient methods and Renegar~\cite{Renegar2019} employing accelerated smoothing techniques on a radial reformulation of the objective critically rely on the reformulation being uniformly Lipschitz continuous, which always occurs in the special cases of the radial dual that they consider.

\paragraph{Our Contributions.} 
This work leverages our radial duality theory to present and analyze projection-free radial optimization algorithms in this newfound, wider context than previous works were able to. 
Finding that a mild condition ensures the radial dual is uniformly Lipschitz continuous, we analyze a radial subgradient method for a broad range of non-Lipschitz primal problems with or without concavity. Observing that constraints radially transform into related gauges, we propose a radial smoothing method that takes advantage of this structure for concave maximization. Further, we find that our radial transformation extends smoothness on a level set of the primal to hold globally in the radial dual, which prompts our analysis of a radial accelerated method.
More important than these particular algorithms, this work aims to demonstrate the breadth of applications and algorithms that can be approached using our radial duality theory.

\paragraph{Outline.} We begin with a motivating example of the computational benefits and scalability that follow from designing algorithms based on the radial dual~\eqref{eq:radial-problem} in Section~\ref{sec:qp-example}. Then Section~\ref{sec:conditioning} formally establishes algorithmically useful properties of our radial dual, namely Lipschitz continuity, smoothness, and growth conditions. Finally, Section~\ref{sec:convex} addresses the convergence of our radial algorithms for concave maximization and Section~\ref{sec:nonconvex} addresses applications and guarantees in nonconcave maximization.

	\section{A Motivating Setting of Polyhedral Constraints}\label{sec:qp-example}
We begin by motivating the algorithmic usefulness of our radial duality by considering optimization with polyhedral constraints. Consider any maximization problem with upper semicontinuous objective $f\colon \RR^n \rightarrow \RR\cup\{-\infty\}$ and $m$ inequality constraints $a_i^Tx\leq b_i$ given by
\begin{equation} \label{eq:polyhedral}
\begin{cases}
\max_x & f(x)\\
\mathrm{s.t.}& Ax\leq b.
\end{cases}
\end{equation}

We assume this problem is feasible.
Then without loss of generality, we have {\color{blue} $0\in \interior (\{x \mid Ax\leq b,  f(x)>0\})$}. This can be achieved by computing any point $x_0$ in the relative interior of $ \{x \mid Ax\leq b,  f(x)\in \RR\}$ and then (i) translating the problem to place $x_0$ at the origin, (ii) adding a constant to the objective to ensure $f(0)>0$, and (iii) if needed, re-parameterizing the problem\footnote{Instead of using a re-parameterization, one can explicitly include equality constraints in our model. The details of this approach are given in Section~\ref{subsubsec:constraints}, where we see that equality constraints are unaffected by the radial dual.} to only consider the smallest subspace containing $\{x \mid Ax\leq b,  f(x)>0\}$. Note that doing this translation suffices to guarantee that any {\color{blue} upper semicontinuous,} concave $f$ will have $f_+(x):=\max\{f(x),0\}$ be strictly upper radial by~\cite[Proposition 11]{Grimmer2021-part1}.
We will only make the weaker assumption here that $f_+$ is strictly upper radial rather than the narrower case of it being concave. Then this problem can be reformulated as the following nonnegative optimization problem of our primal form~\eqref{eq:base-problem}
\begin{equation*}
\begin{cases}
\max_x & f_+(x)\\
\mathrm{s.t.}& Ax\leq b
\end{cases}
= \max_x \min_i\left\{f_+(x),\ \hat\iota_{a_i^Tx\leq b_i}(x)\right\}
\end{equation*}
where $\hat\iota_{a_i^Tx\leq b_i}(x)= \begin{cases}
+\infty & \text{if\ } a_i^Tx\leq b_i\\
0 & \text{if\ } a_i^Tx > b_i
\end{cases}$ is a nonstandard indicator function for each inequality constraint. Note that each $\hat\iota_{a_i^Tx\leq b_i}$ is strictly upper radial since $0$ is strictly feasible and so applying~\cite[Proposition 12]{Grimmer2021-part1} ensures the primal objective $ \min_i\left\{f_+(x),\ \hat\iota_{a_i^Tx\leq b_i}(x)\right\}$ is strictly upper radial. Then we can compute the radially dual optimization problem~\eqref{eq:radial-problem} using~\cite[Proposition 13]{Grimmer2021-part1} as
\begin{equation} \label{eq:polyhedral-dual}
\min_y \max_i\left\{f^\Gamma_+(y),\ a_i^Ty/b_i\right\}
\end{equation}
since the radial transformation of each nonstandard indicator function is linear
\begin{align*}
\hat\iota^{\Gamma}_{a_i^Tx\leq b_i}(y) &= \sup\left\{ v >0 \mid v\cdot \hat\iota_{a_i^Tx\leq b_i}(y/v)\leq 1\right\}\\
&= \sup\left\{ v >0 \mid a_i^T(y/v)> b_i\right\}\\
&= (a_i^Ty/b_i)_+.
\end{align*}
We drop the nonnegative thresholding on $a_i^Ty/b_i$ since $f^\Gamma_+(y)$ is nonnegative.

Importantly, the dual formulation~\eqref{eq:polyhedral-dual} is unconstrained, unlike the primal, since the primal inequality constraints have transformed into simple linear lower bounds on the radially dual objective. This dual further profits from the structure of its objective function as it is often globally Lipschitz continuous (a common property among radial duals that we will show in Proposition~\ref{prop:Lipschitz}) and has the simple form of a finite maximum. This radially dual structure gives us an algorithmic angle of attack not available in the primal problem.

\subsection{Quadratic Programming}
To make these benefits concrete, consider solving a generic quadratic program
\begin{equation} \label{eq:qp}
\begin{cases}
\max_x & 1-\frac{1}{2}x^T Qx - c^Tx\\
\mathrm{s.t.}& Ax\leq b
\end{cases}
\end{equation}
for some $Q\in\RR^{n\times n}$, $c\in\RR^n$, $A\in\RR^{m\times n}$, and $b\in \RR^m_{++}$. {\color{blue} Note this satisfies the needed condition $0\in \interior (\{x \mid Ax\leq b,  f(x)>0\})$ whenever $b>0$ as $f(0)=1$.}
We reformulate this problem as the following nonnegative optimization problem of the form~\eqref{eq:base-problem}
\begin{equation*} 
\begin{cases}
\max_x & 1-\frac{1}{2}x^T Qx - c^Tx\\
\mathrm{s.t.}& Ax\leq b
\end{cases}
= \max_x \min_i\left\{(1-\frac{1}{2}x^T Qx - c^Tx)_+,\ \hat\iota_{a_i^Tx\leq b_i}(x)\right\}.
\end{equation*}
Whenever this primal objective is strictly upper radial, the radial dual of our quadratic program is\footnote{Our calculation of the radial dual of the quadratic objective follows by definition as
	\begin{align*}
	(1-\frac{1}{2}x^T Qx - c^Tx)^\Gamma_+(y) &= \sup\left\{ v>0 \mid v\left(1-\frac{y^T Qy}{2v^2} - \frac{c^Ty}{v}\right)\leq 1 \right\}\\
	&= \sup\{ v>0 \mid v^2-\frac{1}{2}y^T Qy - (c^Ty+1)v\leq 0 \}\\
	&=\left(\frac{c^Ty+1 + \sqrt{(c^Ty+1)^2 +2y^TQy}}{2}\right)_+.
	\end{align*}
}
\begin{equation} \label{eq:qp-dual}
\min_y \max_i\left\{\left(\frac{c^Ty+1 + \sqrt{(c^Ty+1)^2 +2y^TQy}}{2}\right)_+,\  a_i^Ty/b_i\right\}
\end{equation}
where the first term in our maximum is set to zero if $(c^Ty+1)^2 +2y^TQy<0$ as can occur for nonconcave primal objectives. We find that our radial duality holds here whenever $\frac{1}{2}x^TQx > -1$ for all $Ax\leq b$. This captures two natural settings: (i) when the primal objective is concave (as $Q$ is positive semidefinite) or (ii) when the primal objective is nonconcave but has a compact feasible region (since we can rescale the objective to be $1-\lambda x^TQx/2 - \lambda c^Tx$ without changing the set of maximizers but ensuring $\frac{\lambda}{2}x^TQx > -1$ everywhere). Section~\ref{subsec:nc-obj} shows more generally that any differentiable objective with compact constraints can be rescaled to apply our radial duality theory.

We verify that our primal objective is strictly upper radial (and so our radial duality holds) for this upper semicontinuous objective by checking when $f^p(y,\cdot)$ is strictly increasing on its domain. The partial derivative with respect to $v$ of the perspective function
\begin{align*}
v\cdot \min_i\left\{(1-\frac{1}{2}(y/v)^T Q(y/v) - c^T(y/v))_+,\ \hat\iota_{a_i^Tx\leq b_i}(y/v)\right\}&\\
=\begin{cases}
v\left(1-\frac{y^T Qy}{2v^2} - \frac{c^Ty}{v}\right)_+ & \text{ if } A(y/v)\leq b\\
0 & \text{ otherwise}
\end{cases}
\end{align*}
is $1+\frac{y^TQy}{2v^2}$ at every feasible $y/v$. This is always positive (and hence the perspective function is increasing in $v$) exactly when every $x=y/v$ with $Ax\leq b$ has $\frac{1}{2}x^TQx>-1$.

\subsubsection{Quadratic Programming Numerics}
As previously noted, the radially dual formulation~\eqref{eq:qp-dual} is unconstrained and Lipschitz continuous despite the primal possessing neither of these properties. This differs from the structure found from taking a Lagrange dual~\cite{Dorn1960} or gauge dual~\cite{Freund1987}. As a result, our radial dual is well set up for the application of a subgradient method. We consider the following {\it radial subgradient method} with stepsizes $\alpha_k >0$ defined by Algorithm~\ref{alg:radial-subgradient-method}.
\begin{algorithm}[H] 
	\caption{The Radial Subgradient Method} 
	\label{alg:radial-subgradient-method} 
	\begin{algorithmic}[1] 
		\REQUIRE $f\colon \varSpace \rightarrow \extPos$, $x_0\in\dom f$, $T\geq 0$
		\STATE $(y_0, v_0) = \Gamma (x_0, f(x_0))$ \hfill {\it Transform into the radial dual}
		\FOR{$k=0\dots T-1$}
		\STATE $y_{k+1} = y_{k} - \alpha_k \zeta'_{k}, \text{\ where\ } \zeta'_{k}\in\partial_P \radTransSup{f}(y_{k})$ \hfill {\it Run the subgradient method}
		\ENDFOR
		\STATE $(x_T, u_T) = \Gamma (y_T, \radTransSup{f}(y_T))$  \hfill {\it Transform back to the primal}
	\end{algorithmic}
\end{algorithm}
Further noting that the radially dual problem is a finite maximum of simple smooth Lipschitz functions, we can apply the smoothing ideas of Nesterov~\cite{Nesterov2005}. Perhaps the clearest description of these techniques is given by Beck and Teboulle~\cite{Beck2012}. In particular, for any fixed $\eta>0$, we consider the smooth function given by taking a ``soft-max''
\begin{equation} \label{eq:qp-smoothing}
g_\eta(y) = \eta \log\left(\exp\left(\frac{c^Ty+1 + \sqrt{(c^Ty+1)^2 +2y^TQy}}{2\eta}\right)+\sum_{i=1}^m\exp\left(\frac{a_i^Ty}{b_i\eta}\right)\right)
\end{equation}
which approaches our radially dual objective as $\eta\rightarrow 0$. Then we can minimize the radial dual up to accuracy $O(\eta)$ by minimizing this smoothed objective. Doing so with Nesterov's accelerated method gives the following {\it radial smoothing method} defined by Algorithm~\ref{alg:radial-smoothing-method} (a similar radial algorithm was employed by Renegar~\cite{Renegar2019} showing that the transformation of any hyperbolic programming problem also admits a smoothing that can be efficiently minimized).
\begin{algorithm}[H] 
	\caption{The Radial Smoothing Method} 
	\label{alg:radial-smoothing-method} 
	\begin{algorithmic}[1] 
		\REQUIRE $f\colon \varSpace \rightarrow \extPos$, $x_0\in\dom f$, $\eta>0$, $L_\eta>0$, $T\geq 0$
		\STATE $(y_0, v_0) = \Gamma (x_0, f(x_0))$ and $\tilde y_{0}=y_0$ \hfill {\it Transform into the radial dual}
		\STATE Let $g_\eta(y)$ denote an $\eta$-smoothing of $f^\Gamma(y)$
		\FOR{$k=0\dots T-1$}
		\STATE $\tilde y_{k+1} = y_k - \nabla g_\eta(y_k)/L_\eta$ \hfill {\it Run the accelerated method}
		\STATE $y_{k+1} = \tilde y_{k+1} + \frac{k-1}{k+2}(\tilde y_{k+1}-\tilde y_k)$ 
		\ENDFOR
		\STATE $(x_T, u_T) = \Gamma (y_T, \radTransSup{f}(y_T))$  \hfill {\it Transform back to the primal}
	\end{algorithmic}
\end{algorithm}

The per iteration cost of these radial methods is controlled by the cost of evaluating one subgradient of the radially dual objective~\eqref{eq:qp-dual} or one gradient of our smoothing of the radially dual objective~\eqref{eq:qp-smoothing}. Both of these can be done efficiently in closed form in terms of matrix-vector products with $A$ and $Q$. Despite this low iteration cost, a feasible primal solution $(x_k,u_k)=\Gamma (y_k, f^\Gamma(y_k))$ is known at every iteration. Convergence guarantees for the radial subgradient and smoothing methods for concave maximization are given later in Sections~\ref{subsec:subgradient} and~\ref{subsec:smoothing}.

Classic optimization algorithms that preserve feasibility at every iteration tend to have much higher iteration costs. Here we compare with three of the most standard first-order methods that enforce feasibility: projected gradient descent (or rather, projected gradient ascent)
$$ x_{k+1} = \proj_{\{x\mid Ax\leq b\}}\left(x + \nabla f(x)/L\right),$$
an accelerated projected gradient method
\begin{equation*}
\begin{cases}
\tilde x_{k+1} &= \proj_{\{x\mid Ax\leq b\}}\left(x_k + \nabla f(x_k)/L\right)\\
x_{k+1} &= \tilde x_{k+1} + \frac{k-1}{k+2}(\tilde x_{k+1}-\tilde x_k),
\end{cases}
\end{equation*}
and the Frank-Wolfe method\footnote{Quadratic programming was the original motivating setting for Frank-Wolfe~\cite{Frank1956}.} with stepsize sequence $\beta_k>0$
\begin{equation*}
\begin{cases}
\tilde x_{k+1} &\in \argmax_x\left\{\nabla f(x_k)^T x \mid Ax\leq b\right\}\\
x_{k+1} &= x_{k} + \beta_k(\tilde x_{k+1}-x_k).
\end{cases}
\end{equation*}

All three of these methods require solving a subproblem at each iteration. The projected gradient and accelerated gradient methods require repeated projection onto the polyhedron $\{x\mid Ax\leq b\}$, which is itself an instance of~\eqref{eq:qp} specialized to $Q=I$. The Frank-Wolfe method requires repeatedly solving a linear program over this polyhedron. Both of these operations are far more expensive than the matrix-vector products required by the radial subgradient and smoothing methods but may allow them to have a greater improvement in objective value per iteration.

To weigh this tradeoff, we consider running these five algorithms on synthetic quadratic programs given by drawing two matrices $A\in \RR^{m\times n}$ and $P\in\RR^{n\times 100}$ and a vector $c\in\RR^n$ with i.i.d.~Guassian entries and setting $Q=PP^T$ and all $b_i=1$. Then we run each algorithm for $30$ minutes on instances of size $(n,m)\in\{(400,1600),(800,3200),(1600,6400)\}$. Our numerical experiments are conducted on a four-core Intel i7-6700 CPU using Julia 1.4.1 and Gurobi {\color{blue} 9.1.1} to solve any subproblems\footnote{The source code is available at github.com/bgrimmer/Radial-Duality-QP-Example}.
For each method, we set $x_0=0$ and use the following choice of stepsizes: the projected and accelerated gradient methods use $L=\lambda_{max}(Q)$, the Frank-Wolfe method uses an exact linesearch $\beta_k=\min\left(\frac{\nabla f(x_k)^T(\tilde x_{k+1}-x_k)}{\|P^T(\tilde x_{k+1}-x_k)\|^2},1\right) $, the radial subgradient method uses the Polyak stepsize $\alpha_k=\frac{f^\Gamma(y_k)-d^*}{\|\zeta_k'\|^2}$, and the radial smoothing method fixes $L_\eta=0.1\max\{\|a_i/b_i\|^2\}/\eta$ and $\eta\in\{10^{-8},5\times 10^{-7},10^{-7}\}$ for each of our three problem sizes.

The best primal objective value seen by each method is shown in realtime in Figure~\ref{fig:qp-comparison}.
First, we remark on the total number of iterations completed by each method in the allotted half hour, shown in the following table.
{\color{blue} \begin{center}
		\begin{tabular}{|c || c c c|} 
			\hline
			$(n,m)$ & $(400,1600)$ & $(800,3200)$ & $(1600,6400)$ \\ [0.5ex] 
			\hline \hline
			Projected Gradient & 16,520 iter. & 3,900 iter. & 152 iter. \\ 
			\hline
			Accelerated Gradient & 16,490 iter. & 3,425 iter. & 70 iter. \\ 
			\hline
			Frank-Wolfe & 1,033 iter. & 216 iter. & 35 iter. \\
			\hline
			Radial Subgradient & 5,635,174 iter. & 907,797 iter. & 225,971 iter. \\
			\hline
			Radial Smoothing & 3,344,776 iter. & 448,785 iter. & 111,871 iter. \\
			\hline
		\end{tabular}
\end{center}}
In our largest problem setting $(n,m)=(1600,6400)$, which has approximately ten million nonzeros, the projected gradient, accelerated gradient, and Frank-Wolfe methods complete 35-152 steps within our time budget whereas our radial methods take hundreds of thousands of steps. For our smallest instance $(n,m)=(400,1600)$, the accelerated gradient method quickly reaches high accuracy. However, for our moderate-sized instance $(n,m)=(800,3200)$, the classic methods begin to fall off with the radial smoothing method and accelerated method performing comparably up to accuracy $O(\eta)$. For our largest instance $(n,m)=(1600,6400)$, the methods relying on orthogonal projection and linear optimization have their progress substantially slowed due to their high iteration cost. Our radial algorithms appear to provide a more scalable approach.
\begin{figure}
	\includegraphics[width=\linewidth]{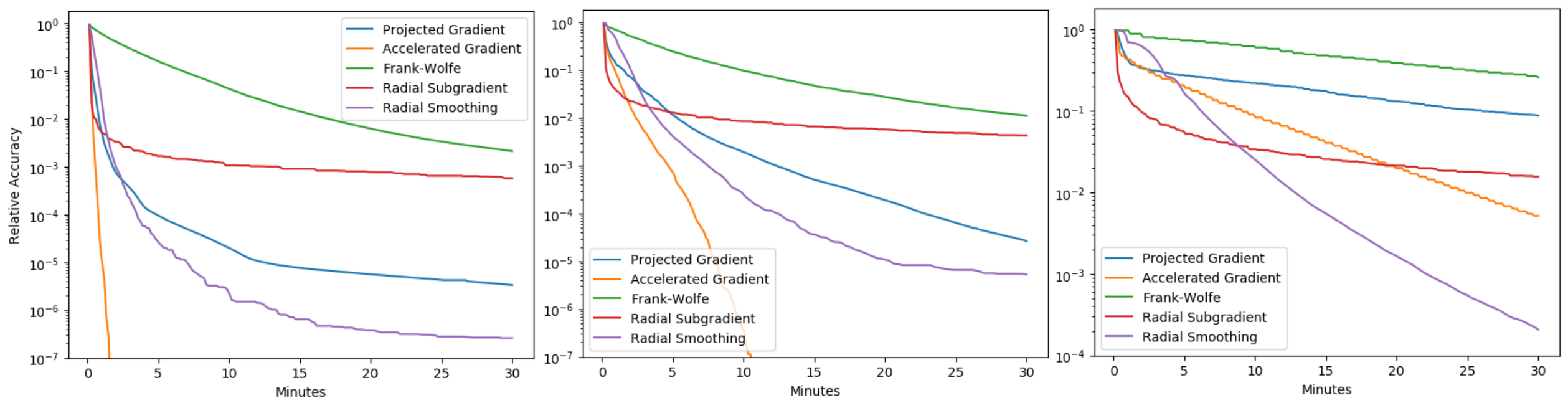}
	\caption{The minimum relative accuracy $\frac{p^*-f(x_k)}{p^*}$ of~\eqref{eq:qp}, with sizes $(n,m)$ equal to $(400,1600),(800,3200),(1600,6400)$ from left to right, seen by the projected gradient, accelerated gradient, Frank-Wolfe, radial subgradient and radial smoothing methods over $30$ minutes.} \label{fig:qp-comparison}
\end{figure}
Throughout our experiments, the radial smoothing method outperforms the radial subgradient method by a couple of orders of magnitude. This agrees with our convergence theory showing that the radial subgradient method converges at a $O(1/\epsilon^2)$ rate while the smoothing technique enables $O(1/\epsilon)$ convergence, presented in Sections~\ref{subsec:subgradient} and~\ref{subsec:smoothing}, respectively. 

{\color{blue} Further comparisons can be made with customized, scalable QP solvers like OSQP~\cite{osqp}, which is based on ADMM and provides approximate primal and dual solutions. 
	In Appendix~\ref{appendix}, we outline how dual solutions can be extracted from the radial smoothing method for the sake of comparison with OSQP. The quality of some primal $x$ and dual $v$ as an approximate KKT solution can be measured in terms of their primal feasibility $ \epsilon_{prim} = \max\{a_i^Tx - b_i, 0\}$, dual feasibility $ \epsilon_{dual} = \|Qx + c + A^Tv\|_\infty $, and complementary slackness $ \epsilon_{comp} = \|(Ax-b)\cdot v\|_\infty $.
	OSQP always has $\epsilon_{comp}=0$ but guarantees neither primal nor dual feasibility at its iterates. Our radial smoothing method always has a feasible primal solution $\epsilon_{prim}=0$, but guarantees neither dual feasibility nor complementary slackness. As a result, only limited conclusions can be drawn between these two methods. Figure~\ref{fig:osqp-comparison} shows the convergence of each OSQP and radial smoothing (with $\eta=10^{-4}$). Although asymptotically OSQP appears to converge faster, throughout the experiment's runtime it is debatable which method's certificates are preferable. Further numerical testing and the design of radial methods/theory focused on KKT attainment are deferred to future works. 
	Note at each iteration, OSQP solves a linear system rather than just relying on matrix multiplications. Numerically, this results in OSQP completing $10,503$ steps whereas our radial smoothing method completed $113,532$.}
\begin{figure}
	\includegraphics[width=\linewidth]{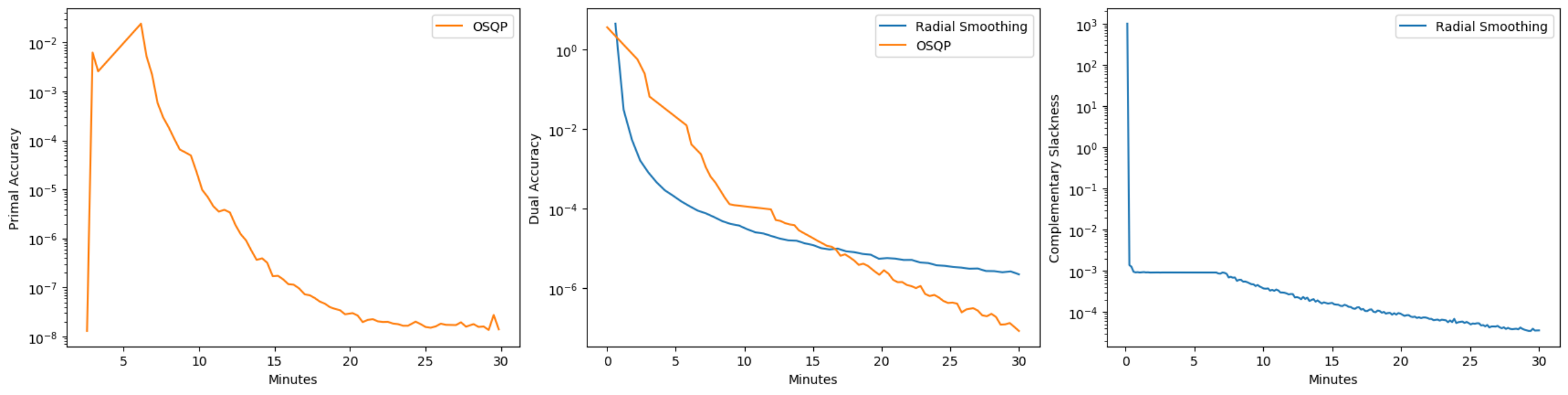}
	\caption{{\color{blue}Convergence of $\epsilon_{prim},\epsilon_{dual},\epsilon_{comp}$ on a random QP of size $(n,m)=(1600,6400)$.}} \label{fig:osqp-comparison}
\end{figure}

\subsection{Broader Computational Advantages from the Radially Dual Problem}
We conclude this motivating section with a high-level discussion of the computational advantages we see in optimizing over the radially dual problem. 

\subsubsection{Maintaining Primal Feasible Iterates Without Costly Projections} \label{subsubsec:constraints}
Here we generalize the setting of polyhedral constraints considered by~\eqref{eq:polyhedral}. After a translation, any convex constraints can be expressed as the intersection of a convex set $S\subseteq\varSpace$ with $0\in \interior S$ and a subspace $T=\{x\in \varSpace \mid Ax=0\}$. Consider any primal problem with strictly upper radial objective $f$ given by
$$ \begin{cases}\max & f(x)\\ \text{s.t.} & x\in S\\ & Ax=0 \end{cases} = \max_{x\in\varSpace} \min\{f(x),\hat\iota_{S}(x),\hat\iota_{T}(x)\}$$
where $\hat\iota_{S}(x)= \begin{cases}
+\infty & \text{if\ } x\in S\\
0 & \text{if\ } x\not\in S.
\end{cases}$ Then the radially dual problem is
$$ \min_{y\in\varSpace} \max\{\radTransSup{f}(y), \gamma_S(y), \gamma_{T}(y)\} =  \begin{cases}\min & \max\{f^\Gamma(y), \gamma_S(y)\}\\ \text{s.t.} & Ay=0 \end{cases}$$
where $\gamma_S(y) = \inf\{\lambda\geq 0 \mid y\in\lambda S\}$ denotes the Minkowski gauge since
\begin{align*}
\hat\iota_S^\Gamma(y) = \sup\{v>0 \mid v\cdot \hat\iota_S(y/v)\leq 1\} &= \sup\{v>0 \mid y/v\not\in S\}\\&= \inf\{\lambda>0 \mid y\in \lambda S\}=\gamma_S(y).
\end{align*}
{\color{blue} The last line above uses that $S$ is convex and contains $0$.}
Having multiple set constraints $S_1\dots S_n$ in the primal $\max_{x\in S_1\cap\dots\cap S_n} f(x)$ simply adds more terms to the dual's maximum of $\min_{y\in\varSpace} \max\{f^\Gamma(y), \gamma_{S_i}(y)\}$.

This formulation allows algorithms to maintain a feasible primal solution at each iteration without requiring costly subproblems relating to $S$. Instead, a primal feasible solution can be recovered from any radial dual solution $y\in \varSpace$ with $Ay=0$ as $x=y/\max\{\radTransSup{f}(y), \gamma_S(y)\}\in S\cap T$ {\color{blue} since $0\in S\cap T$}. Algorithmically, this replaces the need for orthogonal projections onto the feasible region $S\cap T$ with the cheaper operations of orthogonally projecting onto the subspace $T$ and evaluating the gauge of $S$. This computational gain was one of the key contributions identified by~\cite{Renegar2016} and was central to the motivation of~\cite{Renegar2019,Grimmer2017-radial-subgradient} as well as being a motivation of this work.



\subsubsection{Handling Nonconcave Objectives and Nonconvex Constraints}
Our calculation of the radial dual for quadratic programming did not fundamentally rely on concavity as it also applies to nonconcave problems with a bounded feasible region. Indeed one of the key insights from the first part of this work was divorcing the idea of radial transformations from relying on notions of convexity or concavity. In Section~\ref{subsec:nc-obj}, we discuss several nonconcave primal maximization problems where radial duality holds, generalizing the above reasoning to star-convex constraints and covering important areas like nonconvex regularization and optimization with outliers. 

\subsubsection{Efficiently Evaluating Generic Radial Duals}
{\color{blue}
	In many structured settings, we can exactly evaluate the radial transformation with cost comparable to a single evaluation of $f$. Table~\ref{tab:constraints} gives formulas for the gauge of any norm, halfspace, polynomial, or semidefinite programming constraint. Note that the gauge of an intersection of several of these constraints is simply the maximum of each constraint's gauge formula. Similarly, Table~\ref{tab:functions} gives formulas for the radial dual of many common function classes.
	\begin{table}
		{\color{blue}\begin{tabular}{|l|c|}
				\hline Set $S$ (assumed star convex with $0\in \mathrm{int}\ S$) & $\gamma_S(y) = \hat{\iota}^\Gamma_S(y)$ \\
				\hline\hline Norm Constraints $\{x \mid \|x\|\leq b\}$ & $\|y\|/b$\\
				Halfspace Constraints $\{x \mid a^Tx\leq b\}$ & $(a^Ty/b)_+$ \\
				Quadratic Constraints $\{x \mid \frac{1}{2}x^TQx + p^Tx \leq b\}$ & $\left(\frac{p^Ty + \sqrt{(p^Ty)^2 + 2by^TQy}}{2c}\right)_+$\\
				Polynomial Constraints $\{x \mid p(x)\leq 0\}$ & Polynomial Root Finding\\
				Semidefinite Constraints $\{x \mid \mathcal{A}x - B \preceq 0\}$ & $\lambda_{max}(B^{-1}\mathcal{A}y)$ \\ \hline
			\end{tabular}
			\caption{Common families of constraints with closed-form descriptions of their gauge.}\label{tab:constraints}}
	\end{table}
	\begin{table}
		{\color{blue}\begin{tabular}{|l|c|}
				\hline Function $f$ (assumed upper radial, $f(0)>0$) & $f^\Gamma(y)$ \\
				\hline\hline Norms $\|x\|$ & $\hat{\iota}_{\|x\|\leq 1}(y)$\\
				Linear Functions $(a^Tx+b)_+$ & $((1-a^Ty)/b)_+$ \\
				Quadratic Functions $\left(\frac{1}{2}x^TQx + p^Tx + b\right)_+$ & $\left(\frac{1-p^Ty + \sqrt{(1-p^Ty)^2 - 2by^TQy}}{2c}\right)_+$\\
				Polynomial Functions $p(x)_+$ & Polynomial Root Finding\\ \hline
			\end{tabular}
			\caption{Common function classes with closed-form descriptions of their radial dual.}\label{tab:functions}}
	\end{table}
}

Outside these common families, $f^\Gamma$ does not have a closed-form formula. However, numerically evaluating $f^\Gamma(y)$ can be done by bisection whenever $f$ is upper radial: {\color{blue} Note zero is a trivial lower bound on the value of $f^\Gamma(y)$ and an upper bound can be computed via exponential back-off, finding the first integer $i$ with $2^i\cdot  f(y/2^i)>1$ since all $2^i>f^\Gamma(y)$ have this property. Having nondecreasing $v\cdot f(y,v)$ ensures bisection will linearly converge to the unique intermediate value of $v$ where $vf(y/v)$ passes above one. This exponential back-off and subsequent bisection will each require a logarithmic number of function evaluations (both will reach standard machine precision within $\approx 30$ steps). Consequently, even when closed-forms are not available, the radial dual is at most only moderately more expensive to calculate than the primal.} Even if $f$ is not upper radial, $f^\Gamma(y)$ may still be tractable to compute. For example, any polynomial $f$ has evaluation of $f^\Gamma$ amount to computing a polynomial's largest root.
Once $f^\Gamma(y)$ has been computed, its gradient and Hessian can be readily computed from~\eqref{eq:gradient-formula} and~\eqref{eq:hessian-formula}.

\subsubsection{Improving Conditioning and Problem Structure}

As a final motivating example of the structural advantages of taking the radial dual, consider the following Poisson inverse problem.
Given linear measurements with Poisson distribution noise $b_i \sim \mathtt{Poisson}(a_i^Tx)$, the maximum likelihood estimator is given by maximizing
$$\mathcal{L}(x) := \begin{cases} 
\sum_i b_i\log(a_i^Tx) - a_i^Tx & \text{ if all } a_i^Tx>0\\
-\infty & \text{ otherwise.} \end{cases}
$$ 
Then given any convex regularizer $r(x)$ and constraint set $S\subseteq \RR^n$, we formulate a Poisson inverse problem as
\begin{equation} \label{eq:poisson-objective}
\max_{x\in S} \mathcal{L}(x) -r(x).
\end{equation}
This type of problem arises in image processing (see~\cite{Bertero2009} for a survey of applications from astronomy to medical imaging) as well as in network diffusion and time series modeling (see the many references in~\cite{He2016}). Although this problem is concave, the blow-up from the logarithmic terms prevents standard first-order methods from being applied. Provided the regularization $r$ and constraints $S$ are sufficiently simple, customized primal-dual~\cite{He2016} or Bregman methods~\cite{Bauschke2017,Mukkamala2020} provide a powerful tactic for solving this problem.

For generic $S$ and $r$, our radial duality can be applied. Given any {\color{blue} $x_0\in\interior (\dom \mathcal L \cap S)$} and $u_0 < \mathcal{L}(x_0)-r(x_0)$, we can reformulate this objective function to be strictly upper radial via a simple translation and truncation. We consider the equivalent problem of 
$$ \max_{x\in \RR^n} \min\{(\mathcal{L}(x+x_0) -r(x+x_0)- u_0)_+, \hat\iota_{S}(x+x_0)\}. $$
Then we can employ our radial duality machinery using~\cite[Proposition 11]{Grimmer2021-part1} since our translated and truncated objective is concave with $0$ strictly in its domain. The radial dual here is defined everywhere $\dom f^\Gamma = \RR^n$ and globally uniformly Lipschitz continuous (see Proposition~\ref{prop:Lipschitz}). {\color{blue} Moreover if $S=\RR^n$, $r(x)$ is twice continuously differentiable and $\mathcal{L}-r$ has bounded level sets\footnote{{\color{blue} For example, if either $\{a_i\}$ spans $\RR^n$ or the regularizer $r(x)$ has bounded level sets.}}, the radial dual has globally Lipschitz continuous gradient (see Corollary~\ref{cor:smoothness}).} The primal formulation is none of these.
Note that different translations of the objective (here corresponding to a different choice of $(x_0,u_0)$) produce different radial duals, which in turn can have very different global Lipschitz and smoothness constants.
%

\subsection{Notation and Review} \label{sec:review}
We consider functions $f \colon \varSpace \rightarrow \extPos$, where $\extPos = \RR_{++}\cup\{0,+\infty\}$ denotes the ``extended positive reals''. Here $0$ and $+\infty$ are the limit objects of $\RR_{++}$, mirroring the roles of $-\infty$ and $+\infty$ in the extended reals.
The effective domain, graph, epigraph, and hypograph of such a function are
\begin{align*}
\dom f &:= \{x\in\varSpace \mid f(x)\in\RR_{++}\},\\
\graph f &:= \{(x,u)\in \varSpace \times \RR_{++} \mid f(x)= u\},\\
\epi f &:= \{(x,u)\in \varSpace \times \RR_{++} \mid f(x)\leq u\},\\
\hypo f &:= \{(x,u)\in \varSpace \times \RR_{++} \mid f(x)\geq u\}.
\end{align*}
We say a function $f \colon \varSpace \rightarrow \extPos$ is upper (lower) semicontinuous if $\hypo f$ ($\epi f$) is closed with respect to $\varSpace \times \RR_{++}$.
Equivalently, a function is upper semicontinuous if for all $x\in\varSpace$, $f(x)=\limsup_{x'\rightarrow x} f(x')$ and lower semicontinuous if $f(x)=\liminf_{x'\rightarrow x} f(x')$.
We say a function $f \colon \varSpace \rightarrow \extPos$ is concave (convex) if $\hypo f$ ($\epi f$) is convex.
The set of {\it convex normal vectors} of a set $S\subseteq \varSpace\times \RR$ at some $(x,u)\in S$ is denoted by
$$ N^C_{S}((x,u)) := \{(\zeta,\delta) \mid (\zeta,\delta)^T((x,u) - (x',u')) \geq 0\ \forall (x',u')\in S\}.$$
Then the {\it convex subdifferential} and {\it convex supdifferential} of a function $f$ are
\begin{align*}
\partial_C f(x) &:= \{\zeta \mid (\zeta,-1)\in N^C_{\epi f}((x,f(x)))\},\\
\partial^C f(x) &:= \{\zeta \mid (-\zeta,1)\in N^C_{\hypo f}((x,f(x)))\}.
\end{align*}
The proximal normal vectors and differentials of a set $S$ or function $f$ are
\begin{align*}
N^P_{S}((x,u)) &:= \{(\zeta,\delta) \mid (x,u)\in \proj_S( (x,u) + \epsilon(\zeta,\delta)) \text{\ for some\ }\epsilon>0\},\\
\partial_P f(x) &:= \{\zeta \mid (\zeta,-1)\in N^P_{\epi f}((x,f(x)))\},\\
\partial^P f(x) &:= \{\zeta \mid (-\zeta,1)\in N^P_{\hypo f}((x,f(x)))\}.
\end{align*}

\noindent {\it Dual Families of Functions.}
Most of our theory characterizing the radial transformation relies on the given function being (strictly) upper radial. Recall that \cite[Proposition 6]{Grimmer2021-part1} shows an upper semicontinuous function $f$ is upper radial (that is, our radial duality $f^{\Gamma\Gamma}=f$ holds) if and only if all $(x,u)\in\hypo f$ and $(\zeta,\delta)\in N^P_{\hypo f}((x,u))$ satisfy
\begin{equation} \label{eq:radial-NS-condition}
(\zeta,\delta)^T(x,u)\geq 0.
\end{equation}
Geometrically, this corresponds to the origin lying below all of the hyperplanes induced by proximal normal vectors of the hypograph.
Similarly, \cite[Proposition 8]{Grimmer2021-part1} ensures a continuously differentiable function $f$ is strictly upper radial if all $x\in\dom f$ satisfy
\begin{equation} \label{eq:differentiable-strict-radial-NS-condition}
(\nabla f(x),-1)^T(x,u)< 0.
\end{equation}
For concave functions, being upper radial corresponds to the origin lying in the function's domain. In particular, \cite[Proposition 11]{Grimmer2021-part1} ensures an upper semicontinuous concave function $f$ is strictly upper radial if
\begin{equation} \label{eq:concave-strict-radial-NS-condition}
0\in\interior\{x\mid f(x)>0\}.
\end{equation}

Assuming strict upper radiality holds, the following are radially dual
\begin{align}
f\text{ is upper semicontinuous } & \iff \radTransSup{f}\text{ is lower semicontinuous,}\label{eq:semicontinuity-duality}\\
f\text{ is continuous } & \iff \radTransSup{f}\text{ is continuous,}\label{eq:continuity-duality}\\
f\text{ is concave } & \iff \radTransSup{f}\text{ is convex,}\label{eq:convexity-duality}
\end{align}
where these follow from~\cite[Propositions 15, 17]{Grimmer2021-part1}. 
{\color{blue} We say a functions mapping into the extended positives is $k$ times differentiable if it is $k$ times differentiable at each $x$ in $\dom f$.} For differentiable functions satisfying~\eqref{eq:differentiable-strict-radial-NS-condition}, \cite[Proposition 21]{Grimmer2021-part1} shows
\begin{align}
f\text{ is } k \text{ times differentiable} & \iff \radTransSup{f}\text{ is } k \text{ times differentiable,} \label{eq:differentiable-duality}\\
f\text{ is analytic} & \iff \radTransSup{f}\text{ is analytic.} \label{eq:analytic-duality}
\end{align}

\noindent {\it Relating Extreme Points, (Sub)Gradients, and Hessians.}
We recall a few bijections relating functions and their radial transformations. For any strictly upper radial $f$, \cite[Lemma 2]{Grimmer2021-part1} ensures
\begin{equation}\label{eq:upper-graph-bijection}
\epi\radTransSup{f} = \Gamma(\hypo f).
\end{equation} 
Further, \cite[Lemma 3]{Grimmer2021-part1} shows for any continuous strictly upper radial function, the following pair of bijections between graphs and domains hold
\begin{equation}\label{eq:graph-bijection}
\graph\radTransSup{f} = \Gamma(\graph f),
\end{equation} 
\begin{equation}\label{eq:domain-bijection}
y\in\dom\radTransSup{f} \iff y/\radTransSup{f}(y)\in\dom f.
\end{equation} 
Then \cite[Propositions 24, 25]{Grimmer2021-part1} shows that the radial point transformation relates the maximizers of a strictly upper radial function $f$ to the minimizers of $f^\Gamma$ as well as relates their stationary points
\begin{align}
\argmin \radTransSup{f}\times\{\inf \radTransSup{f}\}& = \radTransSet{\left(\argmax f \times\{\sup f\}\right)}, \label{eq:minimizer-conversion}\\
\{(y,\radTransSup{f}(y)) \mid 0\in\partial_P\radTransSup{f}(y)\} &= \radTransSet{\{(x,f(x)) \mid 0\in\partial^Pf(x)\}}. \label{eq:stationary-conversion}
\end{align}
In particular, for any upper semicontinuous, strictly upper radial $f$, the convex and proximal subgradients of its upper radial transformation are given by
\begin{align}
\partial_C\radTransSup{f}(y) &=  \left\{\frac{\zeta}{(\zeta,\delta)^T(x,u)} \mid \begin{bmatrix} \zeta \\ \delta\end{bmatrix}\in N^C_{\hypo f}((x,u)),\ (\zeta,\delta)^T(x,u)>0\right\}\label{eq:convex-subgradient-formula}\\ 
\partial_P\radTransSup{f}(y) &=\left\{\frac{\zeta}{(\zeta,\delta)^T(x,u)} \mid \begin{bmatrix} \zeta \\ \delta\end{bmatrix}\in N^P_{\hypo f}((x,u)),\ (\zeta,\delta)^T(x,u)>0\right\} \label{eq:proximal-subgradient-formula}
\end{align}
where $(x,u) = \Gamma(y,\radTransSup{f}(y))$ by \cite[Propositions 19, 20]{Grimmer2021-part1}.
Further, if $f$ is continuously differentiable and satisfies~\eqref{eq:differentiable-strict-radial-NS-condition}, \cite[Proposition 21]{Grimmer2021-part1} shows the gradient of the upper radial transformation at $y=x/f(x)$ is
\begin{equation}\label{eq:gradient-formula}
\nabla \radTransSup{f}(y) = \frac{\nabla f(x)}{(\nabla f(x),-1)^T(x,f(x))}.
\end{equation}
If in addition we suppose $f$ is twice continuously differentiable around $x$, \cite[Proposition 22]{Grimmer2021-part1} shows the Hessian of the upper radial transformation is
\begin{equation}\label{eq:hessian-formula}
\nabla^2 \radTransSup{f}(y) = \frac{f(x)}{(\nabla f(x),-1)^T(x,f(x))}\cdot J\nabla^2 f(x)J^T
\end{equation}
where $J = I -\frac{\nabla f(x)x^T}{(\nabla f(x),-1)^T(x,f(x))}$.

	\section{Conditioning of the Radially Dual Problem}\label{sec:conditioning}
As we have seen, the radial dual often enjoys structural properties missing from the primal.
In the following three subsections, we characterize the Lipschitz continuity, smoothness, and growth conditions of the radially dual problem. 

\subsection{Lipschitz Continuity of the Radially Dual Problem}
We say a function $f$ is uniformly $M$-Lipschitz continuous if for all $x,x'\in\varSpace$,
$$ |f(x) - f(x')| \leq M\|x-x'\|.$$
For any lower semicontinuous function $f\colon\varSpace\rightarrow \RR_{++}\cup\{\infty\}$, $M$-Lipschitz continuity is equivalent to all proximal subgradients $\zeta\in\partial_P f(x)$ having norm bounded by $M$~\cite[Theorem 1.7.3]{Clarke1998-nonsmoothanalysis}.

Lipschitz continuity plays an important role in the analysis of many first-order methods for nonsmooth optimization. The previous works~\cite{Renegar2016,Renegar2019,Grimmer2017-radial-subgradient} critically rely on their radially reformulated objective being uniformly Lipschitz. Here we present a general characterization of when the radial transformation of a function is uniformly Lipschitz.
To take advantage of the second characterization of Lipschitz continuity above, we need to ensure $\radTransSup{f}$ maps into $\RR_{++}\cup\{\infty\}$. The following simple assumption is equivalent to this (by the definition of the upper radial transformation): for all $y\in\varSpace$,
$\lim_{v\rightarrow 0} v\cdot f(y/v) = 0.$

This condition is always the case when $f$ is bounded above as will typically be the case for our primal maximization problem. Under this condition, we find that the Lipschitz continuity of $\radTransSup{f}$ is controlled by the distance (measured in $\varSpace$) from the origin to each hyperplane defined by a proximal normal vector: 
$$ R(f) = \inf\{\|x'\| \mid 0\neq (\zeta,\delta)\in N^P_{\hypo f}(x,u),\  (\zeta,\delta)^T((x',0) - (x,u))=0\}.$$
The following proposition gives the exact Lipschitz constant in terms of $R(f)$.
\begin{proposition}\label{prop:Lipschitz}
	Consider any upper semicontinuous, strictly upper radial $f$ where all $y\in\varSpace$ have $\lim_{v\rightarrow 0} v\cdot f(y/v)=0$.
	Then $f^\Gamma$ is $1/R(f)$-Lipschitz continuous.
\end{proposition}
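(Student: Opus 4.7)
The plan is to combine the cited Clarke characterization (Lipschitz continuity of a lower semicontinuous $g\colon\varSpace\to \RR_{++}\cup\{\infty\}$ is equivalent to a uniform bound on $\|\zeta'\|$ over $\zeta'\in\partial_P g$) with the explicit proximal subgradient formula~\eqref{eq:proximal-subgradient-formula} for $f^\Gamma$. The two hypotheses of Clarke's theorem are essentially given to us: the lower semicontinuity of $f^\Gamma$ follows from the upper semicontinuity of $f$ via the duality~\eqref{eq:semicontinuity-duality}, and the assumption $\lim_{v\to 0} v\cdot f(y/v)=0$ is exactly what we need to conclude $f^\Gamma(y)>0$ everywhere. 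Indeed, unwrapping the definition, $f^\Gamma(y)=\sup\{v>0\mid v f(y/v)\leq 1\}$, so the hypothesis guarantees the supremum is over a nonempty set, hence $f^\Gamma$ takes values in $\RR_{++}\cup\{\infty\}$ as required.

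Next I would bound an arbitrary $\zeta'\in\partial_P f^\Gamma(y)$. By~\eqref{eq:proximal-subgradient-formula}, writing $(x,u)=\Gamma(y,f^\Gamma(y))$, there exists $(\zeta,\delta)\in N^P_{\hypo f}((x,u))$ with $(\zeta,\delta)^T(x,u)>0$ and
$$ \zeta' \;=\; \frac{\zeta}{(\zeta,\delta)^T(x,u)}.$$
Since the denominator is positive, $\zeta\neq 0$ and $\|\zeta'\|=\|\zeta\|/(\zeta,\delta)^T(x,u)$. The key geometric observation is that the constraint $(\zeta,\delta)^T((x',0)-(x,u))=0$ defining $R(f)$ is the single scalar equation $\zeta^T x' = (\zeta,\delta)^T(x,u)$ in $x'$; among its solutions the minimum-norm one has
$$ \|x'\|_{\min} \;=\; \frac{(\zeta,\delta)^T(x,u)}{\|\zeta\|} \;=\; \frac{1}{\|\zeta'\|}.$$
Because $R(f)$ infimizes $\|x'\|$ over all admissible $(x,u)$, $(\zeta,\delta)$, and $x'$ (in particular over any single admissible minimum-norm choice), we conclude $R(f)\leq 1/\|\zeta'\|$, i.e., $\|\zeta'\|\leq 1/R(f)$. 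Invoking Clarke's characterization then delivers the desired $(1/R(f))$-Lipschitz continuity of $f^\Gamma$.

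The main subtlety I expect is the geometric link in the middle step: recognizing that the seemingly awkward definition of $R(f)$ is really just measuring, for each proximal-normal-induced supporting hyperplane of $\hypo f$, the distance from the origin (in $\varSpace$) to where that hyperplane meets the height-zero hyperplane $\{u=0\}$, and that this distance is precisely the reciprocal of the corresponding proximal subgradient norm of $f^\Gamma$. Once this identification is made the proof collapses to a one-line calculation, but getting the identification right requires carefully handling the case analysis around $\zeta=0$ (which, as noted, is ruled out by the strict positivity condition $(\zeta,\delta)^T(x,u)>0$ appearing in~\eqref{eq:proximal-subgradient-formula}). The degenerate case $R(f)=0$ simply makes the bound $\|\zeta'\|\leq +\infty$ vacuous, so it requires no separate treatment.
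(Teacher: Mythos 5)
Your proposal is correct and follows essentially the same argument as the paper's proof: bound every proximal subgradient of $f^\Gamma$ via the formula~\eqref{eq:proximal-subgradient-formula}, using that $(\zeta,\delta)^T(x,u)=\|\zeta\|\cdot\|x'\|_{\min}\geq\|\zeta\|R(f)$, and then invoke the Clarke characterization. You are slightly more explicit than the paper in verifying the preconditions of Clarke's theorem (lower semicontinuity of $f^\Gamma$ via~\eqref{eq:semicontinuity-duality} and positivity from the limit hypothesis) and in ruling out $\zeta=0$, but the core reasoning is identical.
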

\begin{proof}
	The key observation here is that for any $(\zeta,\delta)\in N^P_{\hypo f}((x,u))$,
	\begin{align}
	(\zeta,\delta)^T(x,u)
	&= \inf\left\{\zeta^Tx'\mid (\zeta,\delta)^T((x',0) - (x,u))=0\right\}\nonumber\\
	&= \|\zeta\|\inf\left\{\|x'\| \mid (\zeta,\delta)^T((x',0) - (x,u))=0\right\} \nonumber\\
	&\geq \|\zeta\|R(f)\nonumber
	\end{align}
	where the first equality is trivial and the second uses that the minimum norm point in this hyperplane will be a multiple of $\zeta$. Then the subgradient formula~\eqref{eq:proximal-subgradient-formula} ensures any $\zeta'\in\partial_P f^\Gamma(y)$ must have
	$$ \|\zeta'\| = \frac{\|\zeta\|}{(\zeta,\delta)^T(x,u)}\leq 1/R(f)$$
	for $(x,u)=\Gamma(y,\radTransSup{f}(y))$ and some $(\zeta,\delta)\in N^P_{\hypo f}((x,u))$. Since every radially dual subgradient is uniformly bounded, $f^\Gamma$ is uniformly Lipschitz. Considering a sequence of $(\zeta,\delta)\in N^P_{\hypo f}((x,u))$ approaching attainment of $R(f)$ makes this argument tight. \qed
\end{proof}

The condition $(x,u)^T(\zeta,\delta) \geq R(f)\|\zeta\|$ can be viewed as a natural way to quantify how radial $f$ is by strengthening~\eqref{eq:radial-NS-condition}. 
When $f$ is concave, $R(f)$ can be simplified.{\color{blue}
	\begin{lemma}\label{lem:concave-R}
		For any upper semicontinuous, upper radial, concave $f$,
		\begin{equation}\label{eq:concave-Lipschitz-constant}
		R(f) = \inf\{\|x\| \mid f(x)=0\}.
		\end{equation}
	\end{lemma}
	\begin{proof}
		By the concavity of $f$, $(\zeta,\delta)^T((x',0) - (x,u))=0$ implies $x' \in \closure\{ \bar x \mid f(\bar x)=0\}$. Hence $R(f) \geq \inf\{\|\bar x\| \mid f(\bar x)=0\}$.
		Any $\bar x$ with $f(\bar x)=0$ has $(\bar x,0)$ separated from $\hypo f$ by a supporting hyperplane with normal $(\zeta,\delta)$ at some $(x,u)\in \hypo f$. Hence $(\zeta,\delta)^T((x',0) - (x,u))=0$ separates $\bar x$ from $0$. So $\|\bar x\|\geq R(f)$. \qed
	\end{proof}
}
This matches the Lipschitz constants used in the previous works~\cite{Renegar2016,Renegar2019,Grimmer2017-radial-subgradient}.
This gives a natural way to measure the extent of radiality of a concave function by strengthening~\eqref{eq:concave-strict-radial-NS-condition}. From this, we see any concave maximization problem (with a known point in the interior of its domain) can be translated and transformed into a convex minimization problem that is uniformly Lipschitz continuous with constant depending on how interior the known point is to the function's domain.

\subsection{Smoothness of the Radially Dual Problem}
We say a continuously differentiable function $f$ is uniformly $L$-smooth if its gradient is $L$-Lipschitz continuous: for all $x,x'\in\dom f$
$$ \|\nabla f(x) - \nabla f(x')\| \leq L \|x-x'\|.$$

As an example, consider the radial dual of the continuously differentiable function $f(x)=\sqrt{(1-x^TQx)_+}$, which is upper radial for any matrix $Q$. This radially transforms into the similarly shaped function
\begin{align*}
f^\Gamma(y) = \sup\{ v>0 \mid v^2-y^TQy \leq 1\} = \sqrt{(1+y^TQy)_+}.
\end{align*}
Supposing $Q$ is positive semidefinite and nonzero, our primal is concave and differentiable on its domain but fails to have a Lipschitz gradient since $\nabla f(x)$ blows up at the boundary of its domain. However, in this case, the radially dual $f^\Gamma$ is well behaved, being convex and $\lambda_{max}(Q)$-smooth. 

For generic functions, we cannot hope to find smoothness out of thin air (like we do in the above example or quite generically with Lipschitz continuity in the previous section). This is due to~\eqref{eq:differentiable-duality} which establishes differentiability is preserved under the radial transformation. 
In line with this equivalence, we find that when $f$ is $L$-smooth, $\radTransSup{f}$ is $O(L)$-smooth, provided the domain of $f$ is bounded.
Let $D(f) = \sup\{\|x\| \mid x\in\dom f\}$ denote the norm of the largest point in the domain of $f$. Note that since we are primarily taking the radial dual of maximization problems that are bounded above and truncated below to be nonnegative optimization, $D(f)$ can be viewed as bounding the level set $\dom f = \{x \mid f(x)>0\}$.

The following proposition shows the operator norm of the radial transformation's Hessian is controlled by the ratio between $D(f)$ and $R(f)$ and the norm of the primal Hessian. From this, we conclude for twice differentiable $L$-smooth functions, the radial dual is also $O(L)$-smooth.
\begin{proposition}\label{prop:hessian}
	Consider any upper radial $f$ with $D(f)<\infty$ and $R(f)>0$ and $x,y\in\varSpace$ satisfying $(x,f(x))=\Gamma(y,\radTransSup{f}(y))$. If $f$ is twice continuously differentiable around $x$, then
	$$\|\nabla^2 \radTransSup{f}(y)\| \leq \left(1+\frac{D(f)}{R(f)}\right)^3 \|\nabla^2 f(x)\|.$$
\end{proposition}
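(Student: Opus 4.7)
The plan is to start from the Hessian formula~\eqref{eq:hessian-formula} and bound each of its three factors separately. Write $c = (\nabla f(x),-1)^T(x,f(x)) = \nabla f(x)^T x - f(x)$, which is nonzero (indeed negative) by the strict upper radial condition~\eqref{eq:differentiable-strict-radial-NS-condition} underlying the formula. Submultiplicativity of the operator norm gives $\|J\,\nabla^2 f(x)\,J^T\| \le \|J\|^2\,\|\nabla^2 f(x)\|$, and hence
$$\|\nabla^2 \radTransSup{f}(y)\| \le \frac{f(x)}{|c|}\,\|J\|^2\,\|\nabla^2 f(x)\|.$$
It therefore suffices to show both $f(x)/|c| \le 1 + D(f)/R(f)$ and $\|J\| \le 1 + D(f)/R(f)$, since multiplying these three factor bounds produces the claimed $(1+D(f)/R(f))^3\,\|\nabla^2 f(x)\|$.

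For $\|J\|$, the triangle inequality applied to the rank-one perturbation $J = I - \nabla f(x) x^T / c$ yields
$$\|J\| \le 1 + \frac{\|\nabla f(x)\|\,\|x\|}{|c|}.$$
The gradient formula~\eqref{eq:gradient-formula} identifies $\nabla \radTransSup{f}(y) = \nabla f(x)/c$, so $\|\nabla f(x)\|/|c| = \|\nabla \radTransSup{f}(y)\|$, and Proposition~\ref{prop:Lipschitz} (applied to the proximal subgradient) bounds this by $1/R(f)$. Combined with $\|x\| \le D(f)$, which holds since $x \in \dom f$, I conclude $\|J\| \le 1 + D(f)/R(f)$.

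For $f(x)/|c|$, rearrange $c = \nabla f(x)^T x - f(x)$ to obtain $f(x) = \nabla f(x)^T x + |c|$, using that $c<0$ so $-c=|c|$. Dividing by $|c|$,
$$\frac{f(x)}{|c|} = 1 + \frac{\nabla f(x)^T x}{|c|} \le 1 + \frac{\|\nabla f(x)\|\,\|x\|}{|c|} \le 1 + \frac{D(f)}{R(f)},$$
by exactly the same two bounds $\|\nabla f(x)\|/|c|\le 1/R(f)$ and $\|x\|\le D(f)$ used in the previous paragraph. Combining the three factor bounds completes the argument.

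The proof is largely careful bookkeeping, and I do not see a serious obstacle. The only subtlety is the sign tracking needed to replace $|c|$ by $-c$ and to keep $f(x)/|c|$ nonnegative in the rearrangement; this falls out cleanly from~\eqref{eq:differentiable-strict-radial-NS-condition}, which is precisely the condition ensuring the denominator $c$ in~\eqref{eq:hessian-formula} is nonzero in the first place.
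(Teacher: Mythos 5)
Your decomposition is exactly the paper's: apply the Hessian formula~\eqref{eq:hessian-formula}, use submultiplicativity to split into the scalar factor $f(x)/|c|$ and two copies of $\|J\|$, and bound each factor by $1+D(f)/R(f)$ via $\|\nabla f(x)\|/|c|\le 1/R(f)$ and $\|x\|\le D(f)$. The one place you diverge is in \emph{how} you get $\|\nabla f(x)\|/|c|\le 1/R(f)$: you route it through the gradient formula~\eqref{eq:gradient-formula} plus Proposition~\ref{prop:Lipschitz}, whereas the paper re-derives the inequality directly, noting that $(-\nabla f(x),1)$ is a proximal normal of $\hypo f$ at $(x,f(x))$ and then invoking the key inequality $(\zeta,\delta)^T(x,u)\ge\|\zeta\|R(f)$ from inside the Lipschitz proof to get $-c\ge\|\nabla f(x)\|R(f)$. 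These are the same inequality, but the direct route is cleaner here because Proposition~\ref{prop:Lipschitz} formally also assumes $\lim_{v\to 0}v\,f(y/v)=0$ for all $y$, which is not among the hypotheses of the proposition you are proving; that extra condition is irrelevant for the pointwise gradient bound at the specific $y$ in question, so your conclusion is still valid, but citing the inequality inside that proof rather than the proposition wholesale would avoid an apparent hypothesis mismatch. Your sign bookkeeping with $|c|=-c$ is tidy and in fact cleaner than the paper's own presentation of that step.
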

\begin{proof}
	First we verify that $(\nabla f(x), -1)^T(x,f(x))< 0$ holds for all $x\in\dom f$ and so the Hessian formula~\eqref{eq:hessian-formula} applies: if $\nabla f(x)=0$, $(\nabla f(x), -1)^T(x,f(x))= -f(x) <0$ and if $\nabla f(x)\neq 0$, $(\nabla f(x), -1)^T(x,f(x))\leq -\|\nabla f(x)\|R(f) <0$.
	Then our bound on the Hessian of $\radTransSup{f}$ follows from the following pair of inequalities. First, the matrix $J = I -\frac{\nabla f(x)x^T}{(\nabla f(x),-1)^T(x,f(x))}$ has operator norm bounded by
	\begin{align*}
	\|J\| &\leq 1 + \frac{\|\nabla f(x)\|\|x\|}{|(\nabla f(x),-1)^T(x,f(x))|}
	\leq 1+\frac{\|x\|}{R(f)}
	\end{align*}
	and second, the Hessian formula's coefficient is similarly bounded by
	\begin{align*}
	\frac{f(x)}{(\nabla f(x),-1)^T(x,f(x))} &= 1 - \frac{\nabla f(x)^Tx}{(\nabla f(x),-1)^T(x,f(x))}\\
	&\leq 1 + \frac{\|\nabla f(x)\|\|x\|}{|(\nabla f(x),-1)^T(x,f(x))|}\\
	&\leq 1+\frac{\|x\|}{R(f)}.	
	\end{align*}
	Bounding each term in the Hessian formula~\eqref{eq:hessian-formula} gives the claimed result. \qed
\end{proof}

\begin{corollary}\label{cor:smoothness}
	Consider any upper radial, twice continuously differentiable $f$ with $D(f)<\infty$ and $R(f)>0$. If $f$ is $L$-smooth, $f^\Gamma$ is $\left(1+\frac{D(f)}{R(f)}\right)^3L$-smooth.
\end{corollary}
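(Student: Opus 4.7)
The proof plan is essentially a direct application of Proposition~\ref{prop:hessian}, converting its pointwise Hessian bound into a global gradient Lipschitz constant via standard arguments.

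First, I would use the hypothesis that $f$ is $L$-smooth and twice continuously differentiable to deduce the pointwise Hessian bound $\|\nabla^2 f(x)\| \leq L$ for all $x \in \dom f$. This is the standard equivalence between gradient Lipschitz continuity and Hessian operator norm boundedness for twice continuously differentiable functions: the Hessian at a point is the derivative of the gradient, and the operator norm of this derivative is upper bounded by the Lipschitz constant of the gradient.

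Second, for any $y \in \dom f^\Gamma$, the domain bijection~\eqref{eq:domain-bijection} (together with the graph bijection~\eqref{eq:graph-bijection}, which applies since $f$ is continuous by being twice differentiable) yields a unique $x = y/f^\Gamma(y) \in \dom f$ satisfying $(x, f(x)) = \Gamma(y, f^\Gamma(y))$. Since $f$ is twice continuously differentiable at this $x$, Proposition~\ref{prop:hessian} applies and gives
$$\|\nabla^2 f^\Gamma(y)\| \leq \left(1 + \frac{D(f)}{R(f)}\right)^3 \|\nabla^2 f(x)\| \leq \left(1 + \frac{D(f)}{R(f)}\right)^3 L.$$

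Third, I would conclude using the reverse direction of the same standard equivalence: since~\eqref{eq:differentiable-duality} ensures $f^\Gamma$ is twice continuously differentiable, and its Hessian operator norm is uniformly bounded by $(1 + D(f)/R(f))^3 L$ throughout $\dom f^\Gamma$, the line-integral identity $\nabla f^\Gamma(y) - \nabla f^\Gamma(y') = \int_0^1 \nabla^2 f^\Gamma(y' + t(y-y'))(y-y')\,dt$ yields the desired Lipschitz bound on $\nabla f^\Gamma$.

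The main subtlety is whether the line-integral step actually requires $\dom f^\Gamma$ to be convex; this is automatic in the natural concave case by~\eqref{eq:convexity-duality} but may require care in general. Beyond that, the proof is essentially bookkeeping: Proposition~\ref{prop:hessian} does all the heavy lifting, and this corollary simply packages its pointwise bound into a global smoothness statement under a uniform bound on $\|\nabla^2 f\|$.
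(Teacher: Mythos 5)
Your proof is essentially the same as the paper's: both reduce the claim to the $C^2$ equivalence between an $L$-Lipschitz gradient and an $L$-bounded Hessian, invoke the domain bijection~\eqref{eq:domain-bijection} (after noting $R(f)>0$ gives strict upper radiality via~\eqref{eq:differentiable-strict-radial-NS-condition}), and then apply Proposition~\ref{prop:hessian} pointwise. The paper states this equivalence in one line; you unpack it with the line-integral identity, which is the standard justification.

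On the subtlety you flag: you are right that the line-integral step implicitly needs the segment $[y',y]$ to lie in $\dom f^\Gamma$, and the paper's terse proof also silently relies on this. However, under the stated hypotheses this is automatic because $\dom f^\Gamma = \varSpace$: since $D(f)<\infty$, for any $y$ and sufficiently small $v>0$ the point $y/v$ leaves $\dom f$, so $v\cdot f(y/v)=0\leq 1$ and $f^\Gamma(y)>0$; and since $R(f)>0$ forces $f(0)>0$, as $v\to\infty$ one has $v\cdot f(y/v)\to\infty$, so $f^\Gamma(y)<\infty$. Thus $f^\Gamma$ is finite-valued on all of $\varSpace$ and the domain is (trivially) convex, so the line-integral argument goes through without needing concavity or~\eqref{eq:convexity-duality}. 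It would be worth noting this explicitly, since the paper's proof does not.
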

\begin{proof}
	For a twice continuously differentiable function, having $L$-Lipschitz gradient is equivalent to having Hessian bounded in operator norm by $L$. Noting that $R(f)>0$ implies $f$ is strictly upper radial by~\eqref{eq:differentiable-strict-radial-NS-condition}, we have a bijection between the domains of $f$ and $f^\Gamma$ from~\eqref{eq:domain-bijection}. Hence the Hessian of $f^\Gamma$ is uniformly bounded by $\left(1+\frac{D(f)}{R(f)}\right)^3L$. \qed
\end{proof}

Although this result requires smoothness of the primal objective $f$ to be maximized, it still provides an algorithmically valuable tool due to the symmetry-breaking nature of considering functions on the extended positive reals $\extPos$. Supposing $f$ is bounded above, this result allows us to extend the smoothness of $f$ on a level set $\dom f = \{x \mid f(x) >0\}$ to global smoothness of the dual $f^\Gamma$ on $\dom f^\Gamma = \varSpace$.

For example, consider an unconstrained $S=\RR^n$ instance of our previous motivating example of the Poisson likelihood problem~\eqref{eq:poisson-objective} which is not defined everywhere (only on $\{x \mid a_i^Tx >0\}$) with gradients blowing up as $x$ approaches the boundary of this domain. However, provided the measurements $\{a_i\}$ span $\RR^n$, this objective has bounded level sets. Consequently, for any twice continuously differentiable $r(x)$, our radial duality provides a reformulation that extends the smoothness on the level set $\{x \mid \mathcal{L}(x)-r(x)>0\}$ to hold globally. 


\subsection{Growth Conditions in the Radially Dual Problem}

For a lower semicontinuous function $f\colon \varSpace \rightarrow \extPos$, we say the {\L}ojasiewicz condition holds at a local minimum $x^*$ if for some constants $r>0$, $C>0$ and exponent $\theta\in [0,1)$, all nearby $x\in B(x^*,r)$ have
\begin{equation} \label{eq:Lojasiewicz-sub}
\dist(0,\partial_P f(x)) \geq C(f(x)-f(x^*))^{\theta}.
\end{equation}
For an upper semicontinuous function $f$ with local maximum $x^*$, we instead require all nearby $x\in B(x^*,r)$ have
\begin{equation} \label{eq:Lojasiewicz-sup}
\dist(0,\partial^P f(x)) \geq C(f(x^*)-f(x))^{\theta}.
\end{equation}
These conditions are widespread, holding for generic subanalytic functions~\cite{Lojasiewicz1963,Lojasiewicz1993} and nonsmooth subanalytic convex functions~\cite{Bolte2007}. These properties are closely related to the Kurdyka-{\L}ojasiewicz (KL) condition~\cite{Kurdyka1998} and H\"olderian growth/error bounds used by~\cite{Bolte2017,Yang2018,Roulet2020,RenegarGrimmer2018}, which are known to speed up the convergence of many first-order methods. 

Under mild conditions, the {\L}ojasiewicz condition is preserved by our radial transformation. Consequently, optimization algorithms based on solving the radially dual problem can enjoy the same improved convergence historically expected in the primal from such conditions. 

\begin{proposition}\label{prop:lojasiewicz}
	Consider any upper semicontinuous, strictly upper radial function $f$ with $R(f)>0$, $\sup f\in\RR_{++}$ {\color{blue} and points $(y^*,f^\Gamma(y^*))=\Gamma(x^*,f(x^*))$}. If $f$ satisfies the {\L}ojasiewicz condition~\eqref{eq:Lojasiewicz-sup} at $x^*$ with exponent $\theta$, then $f^\Gamma$ at $y^*$ satisfies the {\L}ojasiewicz condition~\eqref{eq:Lojasiewicz-sub} with the same exponent $\theta$.
\end{proposition}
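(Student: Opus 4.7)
The plan is to lift the primal {\L}ojasiewicz bound at $x^*$ to $f^\Gamma$ at $y^*$ via the proximal subgradient formula~\eqref{eq:proximal-subgradient-formula}, combined with the elementary dual identity
$$f^\Gamma(y) - f^\Gamma(y^*) = \frac{1}{f(x)} - \frac{1}{f(x^*)} = \frac{f(x^*) - f(x)}{f(x)\,f(x^*)},$$
where $(x, f(x)) = \Gamma(y, f^\Gamma(y))$. First I would observe that $\sup f \in \RR_{++}$ forces $\lim_{v \to 0} v\, f(y/v) = 0$, so Proposition~\ref{prop:Lipschitz} applies and $f^\Gamma$ is globally $1/R(f)$-Lipschitz, hence continuous. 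Then by~\eqref{eq:continuity-duality} $f$ is continuous as well, so the graph bijection~\eqref{eq:graph-bijection} legitimately gives $f(x) = 1/f^\Gamma(y)$ at every $y \in \dom f^\Gamma$ and, by continuity of $y \mapsto y/f^\Gamma(y)$, guarantees that shrinking the neighborhood of $y^*$ keeps the associated $x$ inside the ball around $x^*$ on which the primal {\L}ojasiewicz bound~\eqref{eq:Lojasiewicz-sup} is valid.

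Next, for any $\zeta' \in \partial_P f^\Gamma(y)$ formula~\eqref{eq:proximal-subgradient-formula} produces $(\zeta, \delta) \in N^P_{\hypo f}((x, f(x)))$ with $\zeta' = \zeta / [(\zeta,\delta)^T(x, f(x))]$ and strictly positive denominator. In the regular case $\delta > 0$ I would rescale so $\delta = 1$; then $-\zeta \in \partial^P f(x)$, so $\|\zeta\| \geq C(f(x^*) - f(x))^\theta$ by the primal bound. Bounding the denominator via Cauchy-Schwarz as $(\zeta, 1)^T(x, f(x)) \leq \|\zeta\|\|x\| + f(x)$, I would split on whether $\|\zeta\|\|x\|$ dominates $f(x)$: this yields either $\|\zeta'\| \geq 1/(2\|x\|)$ or $\|\zeta'\| \geq \|\zeta\|/(2 f(x))$. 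In the second subcase, plugging in the primal bound and the dual identity gives
$$\|\zeta'\| \geq \tfrac{C}{2}\, f(x)^{\theta-1} f(x^*)^\theta \bigl(f^\Gamma(y) - f^\Gamma(y^*)\bigr)^\theta,$$
and shrinking the neighborhood so $f(x)$ stays in a compact subinterval of $\RR_{++}$ collapses the prefactor into a positive constant $C'$. The first subcase, and the singular case $\delta = 0$ (which forces $\zeta^T x > 0$ and $\|\zeta'\| \geq 1/\|x\|$), give a lower bound on $\|\zeta'\|$ depending only on $\|x\|$; by continuity of $y \mapsto f^\Gamma(y)$ this dominates $(f^\Gamma(y) - f^\Gamma(y^*))^\theta$ on a sufficiently small neighborhood.

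The main obstacle is that the normalizing denominator in~\eqref{eq:proximal-subgradient-formula} depends on $\zeta$ itself, so there is no single scalar multiplication converting the primal inequality into a dual one. The case split on $\|\zeta\|\|x\|$ versus $f(x)$ is the natural way to decouple this: when $\|\zeta\|$ is large the denominator is dominated by its Cauchy-Schwarz term and $\|\zeta'\|$ is automatically bounded away from $0$, while when $\|\zeta\|$ is small the denominator collapses to roughly $f(x)$ and the primal growth estimate transfers with its exponent $\theta$ intact, modulo a constant determined by $C$, $f(x^*)$, and the local range of $f$.
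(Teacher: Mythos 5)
Your proof is correct and follows essentially the same route as the paper's: establish Lipschitz continuity of $f^\Gamma$ from $\sup f < \infty$ and $R(f)>0$, use this to shrink the dual neighborhood so that the corresponding primal points stay in $B(x^*,r)$, then lift the primal supgradient bound through the proximal subgradient formula~\eqref{eq:proximal-subgradient-formula} and the identity $f^\Gamma(y)-f^\Gamma(y^*) = \frac{f(x^*)-f(x)}{f(x)f(x^*)}$. Two small deviations are worth flagging. First, where you split on whether $\|\zeta\|\|x\|$ or $f(x)$ dominates the Cauchy-Schwarz denominator, the paper instead uses the monotonicity of $t\mapsto t/(t\|x\|+f(x))$ to plug $\|\zeta/\delta\|\ge C(f(x^*)-f(x))^\theta$ directly into the fraction, which avoids the case split and gives one uniform constant; your split works but the Case A bound $\|\zeta'\|\ge 1/(2\|x\|)$ then has to be compared against $(f^\Gamma(y)-f^\Gamma(y^*))^\theta$ separately, which you handle by shrinking the neighborhood. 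Second, for the horizontal normals $\delta=0$ the paper passes to a limiting sequence of nonhorizontal normals via the Horizontal Approximation Theorem and runs the Lojasiewicz bound along that sequence; your observation that $\delta=0$ forces $\|\zeta'\|=\|\zeta\|/(\zeta^Tx)\ge 1/\|x\|$ outright, which then plays the role of Case A, is a genuinely simpler way to dispose of that boundary case. One last remark: your derivation of continuity of $f$ (Lipschitz $f^\Gamma\Rightarrow$ continuous $f^\Gamma\Rightarrow$ continuous $f$ via~\eqref{eq:continuity-duality}, then the graph bijection~\eqref{eq:graph-bijection}) is valid; the paper instead notes that $\delta\ne 0$ already forces $u=f(x)$ at the transformed point, and somewhat misleadingly remarks that $f$ need not be continuous, whereas under these hypotheses it in fact is.
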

\begin{proof}
	Let $r, C, \theta$ satisfy the {\L}ojasiewicz condition of $f$ at $x^*$ and denote the radially dual point as $y^*=x^*/f(x^*)$. Since $f$ is bounded above, $f^\Gamma$ is $1/R(f)$-Lipschitz continuous by Proposition~\ref{prop:Lipschitz}. Then every $0<r'<f^\Gamma(y^*)R(f)$ has $y\in B(y^*,r')$ map to $x=y/f^\Gamma(y)$ with
	\begin{align*}
	\|x-x^*\| &= \left\|\frac{y}{f^\Gamma(y)}-\frac{y^*}{f^\Gamma(y^*)}\right\|\\
	&\leq \left\|\frac{y-y*}{f^\Gamma(y)}\right\| +\left\|\frac{y^*}{f^\Gamma(y)}-\frac{y^*}{f^\Gamma(y^*)}\right\|\\
	& = \frac{\|y-y^*\|}{f^\Gamma(y)} + \frac{\|y^*\||f^\Gamma(y)-f^\Gamma(y^*)|}{f^\Gamma(y)f^\Gamma(y^*)}\\
	& \leq \frac{r'}{f^\Gamma(y)} + \frac{\|y^*\|r'}{R(f)f^\Gamma(y)f^\Gamma(y^*)}\\
	& \leq \frac{r'}{f^\Gamma(y^*)-r'/R(f)} + \frac{\|y^*\|r'}{R(f)(f^\Gamma(y^*)-r'/R(f))f^\Gamma(y^*)}.
	\end{align*}
	Therefore selecting small enough $r'$ guarantees that all of the dual points near $y^*$ map back to primal points $x=y/f^\Gamma(y)$ in the ball $B(x^*,r)$ where the {\L}ojasiewicz condition holds. Further, the Lipschitz continuity of the radial dual allows us to guarantee that all of these primal points have $f(x)$ bounded below by nearly $f(x^*)$ as	
	$$f(x)=f^{\Gamma\Gamma}(x)\geq 1/f^\Gamma(y) \geq 1/(f^\Gamma(y^*)-r'/R(f)) = (f(x^*)^{-1} + (R(f)/r')^{-1})^{-1}. $$
	Combining this with the assumed upper semicontinuity of $f$, we have $f(x)\rightarrow f(x^*)$ as $y\rightarrow y^*$ (despite not assuming continuity of the primal function $f$).
	
	Then all that remains is to show the {\L}ojasiewicz supgradient norm lower bound from the primal extends to lower bound the norm of the radially dual subgradients. For every $y\in B(y^*,r')$, the formula~\eqref{eq:proximal-subgradient-formula} ensures every $\zeta'\in\partial_P f^\Gamma(y)$ has
	$ \zeta'=\zeta/(\zeta,\delta)^T(x,u) $
	where $(x,u)=\Gamma(y,f^\Gamma(y))$ and $(\zeta,\delta)\in N^P_{\hypo f}((x,u))$. First, suppose $\delta\neq 0$. Then $u=f(x)$ and $-\zeta/\delta\in\partial^Pf(x)$ is a primal supgradient. Consequently, radially dual subgradients have size of at least
	\begin{align*}
	\|\zeta'\| &= \frac{\|\zeta/\delta\|}{(\zeta/\delta,1)^T(x,f(x))}\\
	&\geq \frac{\|\zeta/\delta\|}{\|\zeta/\delta\|\|x\|+f(x)}\\
	&\geq \frac{C(f(x^*)-f(x))^\theta}{C(f(x^*)-f(x))^\theta\|x\| + f(x)}\\
	&\geq \frac{Cf^\theta(x)f^\theta(x^*)}{C(f(x^*)-f(x))^\theta\|x\| + f(x)}\left(f^\Gamma(y)-f^\Gamma(y^*)\right)^\theta
	\end{align*}
	where the final inequality uses that $f(x)\geq 1/f^\Gamma(y)$ and $f(x^*)=1/f^\Gamma(y^*)$. Recalling that as $y\rightarrow y^*$, the related primal point $x=y/f^\Gamma(y)\rightarrow x^*$ and $f(x)\rightarrow f(x^*)$, the coefficient above must converge to a positive constant
	$$ \frac{Cf^\theta(x)f^\theta(x^*)}{C(f(x^*)-f(x))^\theta\|x\| + f(x)} \rightarrow \frac{Cf^{2\theta}(x^*)}{C0^\theta\|x^*\| + f(x^*)}. $$
	
	The boundary case of horizontal normal vectors with $\delta=0$ follows from the same argument above by passing to a sequence of points $(x_i,f(x_i))\rightarrow (x,f(x))$ and proximal normal vectors $(\zeta_i,\delta_i)\in N^P_{\hypo f}((x_i,f(x_i)))$ with $(\zeta_i,\delta_i)\rightarrow (\zeta,\delta)$ and $\delta_i\neq 0$. The existence of such a sequence is guaranteed by the Horizontal Approximation Theorem~\cite[Page 67]{Clarke1998-nonsmoothanalysis}. \qed
\end{proof}

The case of $\theta=0$ above is the important special case of sharpness. If this condition holds globally,~\eqref{eq:Lojasiewicz-sub} and~\eqref{eq:Lojasiewicz-sup} correspond to the global error bounds
\begin{equation}\label{eq:sharp-lower-bound}
f(x) \geq f(x^*) + C\|x-x^*\|	
\end{equation}
and
\begin{equation}\label{eq:sharp-upper-bound}
f(x) \leq \left(f(x^*) - C\|x-x^*\|\right)_+
\end{equation}
respectively. This condition has a long history in nonsmooth optimization (see Burke and Ferris~\cite{Burke1993} as a classic reference establishing the prevalence of sharp minima).
The two global sharp error bounds~\eqref{eq:sharp-upper-bound} and~\eqref{eq:sharp-lower-bound} are dually related.
\begin{proposition} \label{prop:sharpness}
	For any upper semicontinuous, strictly upper radial $f$ {\color{blue} with points $(y^*,f^\Gamma(y^*))=\Gamma(x^*,f(x^*))$} satisfying~\eqref{eq:sharp-upper-bound} at $x^*\in\varSpace$ with constant $C$, then $f^\Gamma$ satisfies~\eqref{eq:sharp-lower-bound} at $y^*$ with constant
	$ C/(C\|x^*\|+f(x^*)).$
\end{proposition}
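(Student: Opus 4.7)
The plan is to pull any radial-dual point $y$ back to the primal via $x = y/f^\Gamma(y)$, apply the sharp upper bound~\eqref{eq:sharp-upper-bound} at that primal point, and then push the resulting inequality back through $\Gamma$ to produce a sharp lower bound on $f^\Gamma$ at $y^*$.

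First I would dispose of boundary cases. Writing $v = f^\Gamma(y)$ and $v^* = f^\Gamma(y^*)$, the claim is vacuous when $v = +\infty$, and the case $v = 0$ cannot occur: since~\eqref{eq:sharp-upper-bound} makes $f$ bounded above by $f(x^*)$, every sufficiently small $v>0$ satisfies $v f(y/v) \leq v f(x^*) \leq 1$, so the supremum defining $f^\Gamma(y)$ is strictly positive. The sharp upper bound also pins down $x^*$ as the unique maximizer of $f$, so~\eqref{eq:minimizer-conversion} identifies $y^*$ as the unique minimizer of $f^\Gamma$ and~\eqref{eq:graph-bijection} gives $v^* = 1/f(x^*)$; in particular $v \geq v^*$.

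Next, for $y \in \dom f^\Gamma$, I would set $x = y/v$, which lies in $\dom f$ with $f(x) = 1/v$ by the domain and graph bijections~\eqref{eq:domain-bijection} and~\eqref{eq:graph-bijection}. Since $f(x) > 0$, the positive-part thresholding in~\eqref{eq:sharp-upper-bound} is inactive and yields
$$ \frac{1}{v} \;\leq\; \frac{1}{v^*} - C\left\|\frac{y}{v} - \frac{y^*}{v^*}\right\|. $$
Multiplying through by $vv^* > 0$ rearranges this into the cross-multiplied form $v - v^* \geq C\|v^* y - v y^*\|$.

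Finally, I would split $v^* y - v y^* = v^*(y - y^*) - (v - v^*) y^*$ and apply the triangle inequality in the form $\|A - B\| \geq \|A\| - \|B\|$ (valid unconditionally, even when the right-hand side is negative) to conclude
$$ (v - v^*)\bigl(1 + C\|y^*\|\bigr) \;\geq\; C v^* \|y - y^*\|. $$
Substituting $v^* = 1/f(x^*)$ and $\|y^*\| = \|x^*\|/f(x^*)$ produces exactly the claimed constant $C/(C\|x^*\| + f(x^*))$. The main subtlety I anticipate is the extended-arithmetic bookkeeping: ruling out $v = 0$ and confirming $x \in \dom f$ so that the positive-part thresholding can be dropped; beyond that, the proof is essentially algebraic manipulation of $\Gamma$ together with one application of the triangle inequality.
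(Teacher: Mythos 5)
Your proof is correct but takes a genuinely different route from the paper's. The paper does not pull a generic dual point back to the primal at all; instead it defines the concave majorant $h(x) := f(x^*) - C\|x-x^*\|$, verifies $h_+$ is strictly upper radial, directly lower-bounds $h^\Gamma_+$ by exhibiting an explicit value of $v$ with $h^p_+(y,v)\leq 1$, and then invokes the order-reversing property of the transformation ($f\leq h_+ \implies f^\Gamma \geq h^\Gamma_+$, from \cite[Lemma 4.7]{Grimmer2021-part1}) to transfer the bound to $f^\Gamma$. Your argument instead works pointwise: it maps $y\mapsto x=y/f^\Gamma(y)$, applies~\eqref{eq:sharp-upper-bound} at that $x$, cross-multiplies, and finishes with the reverse triangle inequality on $v^*y - vy^*$. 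Both are clean; the paper's is perhaps slicker in that the algebra is confined to one envelope function, while yours is more elementary in that it avoids the monotonicity lemma entirely.

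One small citational fix: you invoke the graph and domain bijections~\eqref{eq:graph-bijection} and~\eqref{eq:domain-bijection} to assert $f(x)=1/v$, but as stated in the paper these require $f$ to be \emph{continuous}, whereas the proposition only assumes upper semicontinuity. This does not break the argument, because you only use the inequality $1/v \leq f(x)$ (to then chain with the sharp upper bound), and this direction is exactly what the epigraph bijection~\eqref{eq:upper-graph-bijection} delivers for any strictly upper radial $f$: $(y, f^\Gamma(y))\in\epi f^\Gamma = \Gamma(\hypo f)$ gives $f(y/f^\Gamma(y))\geq 1/f^\Gamma(y) > 0$, which simultaneously places $x$ in $\dom f$ (so the positive-part thresholding in~\eqref{eq:sharp-upper-bound} is inactive). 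Swap in~\eqref{eq:upper-graph-bijection} for~\eqref{eq:graph-bijection}, and the proof is airtight. Likewise $v^*=1/f(x^*)$ is best obtained from~\eqref{eq:minimizer-conversion}, which you already cite and which does not need continuity.
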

\begin{proof}
	Denote the assumed upper bound on $f$ from sharpness as
	$ h(x) := f(x^*) - C\|x-x^*\|$.
	Then $h_+$ must be strictly  upper radial due to~\eqref{eq:concave-strict-radial-NS-condition} since $h$ is concave with $h(0)= 2h(x^*/2) - h(x^*)\geq 2f(x^*/2) - f(x^*)> 0$
	where the first equality uses that $h$ is linear on the segment $[0,x^*]$, the inequality uses that $h(x^*/2)\geq f(x^*/2)$, and the strict inequality uses that $f$ is strictly upper radial.
	The upper radial transformation $h^\Gamma_+$ is lower bounded by our claimed sharpness lower bound for any $y\in\varSpace$
	$$ h^\Gamma_+(y) \geq \frac{1}{f(x^*)} + \frac{C\left\|y - x^*/f(x^*)\right\|}{f(x^*)+C\|x^*\|} = f^\Gamma(y^*) + \frac{C\left\|y - y^*\right\|}{f(x^*)+C\|x^*\|} $$
	since $h^p_+(y,v)$ at $v=\frac{1}{f(x^*)} + \frac{C\left\|y - x^*/f(x^*)\right\|}{f(x^*)+C\|x^*\|}$ is at most
	\begin{align*}
	&\left(\frac{1}{f(x^*)} + \frac{C\left\|y - y^*\right\|}{f(x^*)+C\|x^*\|}\right)f(x^*) - C\left\|y-\left(\frac{1}{f(x^*)} + \frac{C\left\|y - y^*\right\|}{f(x^*)+C\|x^*\|}\right)x^*\right\|\\
	&\leq 1 + \frac{C\left\|y - y^*\right\|}{f(x^*)+C\|x^*\|}f(x^*) - C\left\|y-y^*\right\| + \frac{C^2\left\|y - y^*\right\|\|x^*\|}{f(x^*)+C\|x^*\|}\\	
	&=1 + C\left\|y - y^*\right\|\left(\frac{f(x^*)}{f(x^*)+C\|x^*\|} - 1 + \frac{C\|x^*\|}{f(x^*)+C\|x^*\|}\right)
	= 1	
	\end{align*}
	where $y^*=x^*/f(x^*)$ and the inequality uses the reverse triangle inequality.
	Using \cite[Lemma 4]{Grimmer2021-part1}, $f \leq h_+$ implies $f^\Gamma \geq h^\Gamma_+$, completing our proof. \qed
\end{proof}

	\section{Radial Algorithms for Concave Maximization}\label{sec:convex}
Now we turn our attention to understanding the primal convergence guarantees that follow from algorithms minimizing the radial dual. In this section, we consider concave maximization problems where being strictly upper radial and having $R(f)>0$ hold without loss of generality via a simple translation. 

We first remark on the natural measure of optimality in the primal that arises from considering the radial dual. Recall the set of fixed points of $\Gamma$ are exactly the horizontal line at height one $\{(y,1) \mid x\in \varSpace \}=\Gamma\{(x,1)\mid x\in\varSpace\}$. Consequently, a natural way to relate nearly optimal solutions between the primal and radial dual comes from considering when $\sup f =\inf f^\Gamma=1$. In this case, finding a dual point with accuracy
$ f^\Gamma(y_k)- \inf f^\Gamma \leq \epsilon$
{\color{blue} implies} a relative accuracy primal guarantee of
$$ \frac{\sup f-f(x_k)}{f(x_k)} \leq \epsilon. $$
using that $1/f^\Gamma(y_k)\leq f^{\Gamma\Gamma}(x_k)=f(x_k)$ for $x_k=y_k/f^\Gamma(y_k)$ on any upper radial $f$. Following this, we state all of our radial algorithm convergence guarantees in relative terms.

Secondly, we remark on the meaning of finding a radially dual solution minimized to zero objective value $f^\Gamma(y)=0$. In this case, $y$ certifies that the primal maximization is unbounded as the ray $(y,1)/v\in\epi f$ for all $v>0$. Note the converse of this is not true: for example, the strictly radial function $f(x)=\sqrt{(x+1)_+}$ is unbounded above, but has $f^\Gamma(y)>0$ everywhere.

\subsection{Radial Subgradient Method}\label{subsec:subgradient}
We begin by considering the radial subgradient method previously defined in Algorithm~\ref{alg:radial-subgradient-method}. This method simply takes the radial dual, applies the classic subgradient method to the resulting minimization problem, and then takes the radial dual again to return a primal solution.
Importantly this method is projection-free since any primal constraint set $S$ appears in the radial dual objective through its gauge $\gamma_S$.
This method is very similar to those considered in~\cite{Renegar2016,Grimmer2017-radial-subgradient} which also apply a subgradient method to a radial reformulation. However, those methods include additional steps periodically rescaling their radial objective. Our algorithm omits such steps while matching the improved convergence guarantees of~\cite{Grimmer2017-radial-subgradient}.

The standard subgradient method analysis shows the radial subgradient iterates $y_k$ converge in terms of radial dual optimality at a rate controlled by the radially dual Lipschitz constant. Recall that translating a point in the interior of $\hypo f$ to the origin ensures $R(f)>0$ {\color{blue} (by Lemma~\ref{lem:concave-R})} and so the radial dual is Lipschitz continuous (by Proposition~\ref{prop:Lipschitz}). Consequently, no structure needs to be assumed beyond concavity to analyze the radial subgradient method.

\begin{theorem}\label{thm:convex-radial-subgradient}
	Consider any upper semicontinuous, concave $f$ with $R(f)>0$ and $p^* = \sup f\in\RR_{++}$ attained on some nonempty set $X^*\subseteq\varSpace$. Then the radial subgradient method (Algorithm~\ref{alg:radial-subgradient-method}) with stepsizes $\alpha_k$ has primal solutions $x_k=y_k/f^\Gamma(y_k)$ satisfy
	$$\min_{k<T}\left\{\frac{p^*-f(x_k)}{f(x_k)}\right\} \leq \frac{\dist(p^*y_0,X^*)^2+\sum_{k=0}^{T-1}(p^*\alpha_k/R(f))^2}{2\sum_{k=0}^{T-1}p^*\alpha_k}.$$
	Selecting $x_0=0$ and $\alpha_k=\epsilon f^\Gamma(y_k)/\|\zeta'_k\|^2$ for any $\epsilon>0$ ensures
	$$ T \geq \frac{\dist(x_0,X^*)^2}{R(f)^2\epsilon^2} \quad \implies \quad \frac{1}{T} \sum_{k=0}^{T-1}\frac{p^*-f(x_k)}{p^*} \leq \epsilon.$$
\end{theorem}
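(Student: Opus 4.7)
The plan is to apply the classical subgradient method convergence analysis to the dual convex problem $\min f^\Gamma$ and translate the resulting bound back to primal relative accuracy via the correspondence $\Gamma(y_k,f^\Gamma(y_k)) = (x_k, 1/f^\Gamma(y_k)) \in \hypo f$.

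I would first line up the dual problem's structure. Since $f$ is concave with $R(f)>0$ (which forces $0$ into the interior of $\{f>0\}$) and $p^* \in \RR_{++}$ is attained, the characterization~\eqref{eq:concave-strict-radial-NS-condition} shows $f$ is strictly upper radial. Then~\eqref{eq:convexity-duality} makes $f^\Gamma$ convex (so $\partial_P f^\Gamma(y_k) \subseteq \partial_C f^\Gamma(y_k)$ obeys the usual subgradient inequality), Proposition~\ref{prop:Lipschitz} bounds $\|\zeta_k'\| \leq 1/R(f)$, and~\eqref{eq:minimizer-conversion} identifies $\argmin f^\Gamma = X^*/p^*$ with $d^* := \inf f^\Gamma = 1/p^*$. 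Fixing $x^* \in X^*$ and setting $y^* := x^*/p^*$, the scaling $\|y_0-y^*\|^2 = \|p^* y_0 - x^*\|^2/(p^*)^2$ is what will eventually produce the $\dist(p^*y_0, X^*)^2$ factor after minimizing over $x^*$.

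The first bound then follows by expanding
\begin{equation*}
\|y_{k+1}-y^*\|^2 \leq \|y_k-y^*\|^2 - 2\alpha_k(f^\Gamma(y_k) - d^*) + \alpha_k^2\|\zeta_k'\|^2,
\end{equation*}
telescoping, and invoking $\|\zeta_k'\|^2 \leq 1/R(f)^2$. The key primal--dual dictionary $f(x_k) \geq 1/f^\Gamma(y_k)$ --- immediate from $f = f^{\Gamma\Gamma}$ applied at $x_k = y_k/f^\Gamma(y_k)$ --- converts this into $(p^*-f(x_k))/f(x_k) \leq p^* f^\Gamma(y_k) - 1 = p^*(f^\Gamma(y_k)-d^*)$. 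Taking the minimum over $k<T$, multiplying numerator and denominator of the resulting fraction by $(p^*)^2$, and minimizing over $x^* \in X^*$ yields the first claim.

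For the second claim I would plug the stepsize $\alpha_k = \epsilon f^\Gamma(y_k)/\|\zeta_k'\|^2$ into the descent inequality, exploiting the identity $\alpha_k^2 \|\zeta_k'\|^2 = \epsilon \alpha_k f^\Gamma(y_k)$ together with the sharper conversion $\alpha_k(f^\Gamma(y_k)-d^*) \geq \alpha_k f^\Gamma(y_k)(p^*-f(x_k))/p^*$ (multiplying $(p^*-f(x_k))/p^* \leq 1 - d^*/f^\Gamma(y_k)$ through by $\alpha_k f^\Gamma(y_k)$). After telescoping this produces
\begin{equation*}
\sum_{k=0}^{T-1} \alpha_k f^\Gamma(y_k)\left(\frac{2(p^*-f(x_k))}{p^*} - \epsilon\right) \leq \|y_0-y^*\|^2.
\end{equation*}
Since $x_0 = 0$ makes $y_0 = 0$ through the initialization $\Gamma(0, f(0)) = (0, 1/f(0))$, and since $\alpha_k f^\Gamma(y_k) = \epsilon f^\Gamma(y_k)^2/\|\zeta_k'\|^2 \geq \epsilon R(f)^2/(p^*)^2$ (using $f^\Gamma(y_k) \geq d^* = 1/p^*$ and $\|\zeta_k'\| \leq 1/R(f)$), rearranging and imposing $T \geq \dist(x_0, X^*)^2/(R(f)^2\epsilon^2)$ extracts the averaged accuracy bound.

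The main obstacle I anticipate is the final step of the second claim: the summed inequality most naturally bounds a \emph{weighted} average of the primal accuracies, so passing to the uniform $\frac{1}{T}\sum$ form requires carefully using the uniform lower bound on $\alpha_k f^\Gamma(y_k)$ in combination with the observation that any iteration with $(p^*-f(x_k))/p^* > \epsilon/2$ corresponds to $f^\Gamma(y_k) > 2d^*/(2-\epsilon)$ and hence to genuine contraction $\|y_{k+1}-y^*\|^2 < \|y_k-y^*\|^2$ under the Polyak-like stepsize; iterations on the complementary set already satisfy the $\epsilon$ accuracy target. All remaining pieces are routine subgradient bookkeeping enabled by the dual's convexity and Lipschitz continuity.
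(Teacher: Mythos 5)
Your proposal mirrors the paper's own proof step for step: show $f^\Gamma$ is convex and $1/R(f)$-Lipschitz, telescope the classical subgradient inequality, apply the dictionary $f(x_k)\geq 1/f^\Gamma(y_k)$, and rescale by $p^*=1/d^*$ to turn the dual gap into relative primal accuracy, with $x_0=0\Rightarrow y_0=0$ giving the distance term $\dist(x_0,X^*)$. The weighted-to-uniform issue you flag as the "main obstacle" is precisely the delicate step where the paper replaces $\left(f^\Gamma(y_k)/(d^*\|\zeta_k'\|)\right)^2$ by $R(f)^2$ inside the sum, so your awareness of it and the contraction observation you sketch are consistent with the paper's handling of it.
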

\begin{proof}
	Having $R(f)>0$ ensures $f$ is strictly upper radial by~\eqref{eq:concave-strict-radial-NS-condition}. Then $f^\Gamma$ is convex by~\eqref{eq:convexity-duality} and has minimum value $d^*=1/p^*$ attained on $Y^*:=X^*/p^*$ by~\eqref{eq:minimizer-conversion}.
	The classic convex convergence analysis of subgradient methods follows from the fact that: for any $y^*\in Y^*$,
	\begin{align*}
	\|y_{k+1}-y^*\|^2 & = \|y_{k}-y^*\|^2 -2\alpha_k\zeta_k'^T(y_k-y^*) + \alpha_k^2\|\zeta_k'\|^2\\
	&\leq \|y_{k}-y^*\|^2 -2\alpha_k(f^\Gamma(y_k)-d^*) + \alpha_k^2\|\zeta_k'\|^2
	\end{align*}
	and so inductively,
	\begin{equation}\label{eq:subgradient-base}
	\sum_{k=0}^{T-1} \alpha_k(f^\Gamma(y_k)-d^*) \leq \frac{\|y_0-y^*\|^2+\sum_{k=0}^{T-1}\alpha_k^2\|\zeta_k'\|^2}{2}.
	\end{equation}
	Noting $(x_k,u_k)=\Gamma(y_k,f^\Gamma(y_k))$, the primal iterates have $f(x_k)\geq 1/f^{\Gamma}(y_k)$. Then multiplying through by $(1/d^*)^2$, which equals $(p^*)^2$, yields
	\begin{align*}
	\sum_{k=0}^{T-1}\frac{\alpha_k}{d^*}\left(\frac{p^*-f(x_k)}{f(x_k)}\right) &= \sum_{k=0}^{T-1}\frac{\alpha_k}{(d^*)^2}\left(\frac{1}{f(x_k)}-\frac{1}{p^*}\right)\\
	&\leq \frac{\|y_0/d^*-y^*/d^*\|^2+\sum_{k=0}^{T-1}(\alpha_k/d^*)^2\|\zeta_k'\|^2}{2}.
	\end{align*}
	Since $f^\Gamma$ is $1/R(f)$-Lipschitz (by Proposition~\ref{prop:Lipschitz}), every radially dual subgradient is uniformly bounded by $\|\zeta_k'\|\leq 1/R(f)$. Then selecting $y^*=\proj_{Y^*}(y_0)$ gives our claimed primal convergence rate.
	Observe that setting $x_0=0$ sets $y_0=x_0/f(x_0)=0$ as well. Then plugging $\alpha_k=\epsilon f^\Gamma(y_k)/\|\zeta'_k\|^2$ into~\eqref{eq:subgradient-base} yields
	\begin{align*}
	\frac{\dist(x_0,X^*)^2}{2} = \frac{\dist(y_0/d^*,X^*)^2}{2}
	&\geq \sum_{k=0}^{T-1}\frac{\alpha_k}{d^*}\left(\frac{f^\Gamma(y_k)-d^*}{d^*} - \frac{1}{2}\left(\frac{\alpha_k}{d^*}\right)\|\zeta_k'\|^2\right)\\
	&\geq \sum_{k=0}^{T-1}\epsilon\left(\frac{ f^\Gamma(y_k)}{d^*\|\zeta_k'\|}\right)^2\left(\frac{p^*-f(x_k)}{p^*} - \frac{\epsilon}{2}\right)\\
	&\geq \sum_{k=0}^{T-1}\epsilon R(f)^2\left(\frac{p^*-f(x_k)}{p^*} - \frac{\epsilon}{2}\right)
	\end{align*}
	{\color{blue} where the last line bounds $1/\|\zeta_k'\|^2$ below by $R(f)^2$ and $f^\Gamma(y_k)/d^*$ below by one.} Rearranging this completes our proof. \qed
\end{proof}
Recall for concave $f$ the formula for $R(f)$ can be simplified to $\inf\{\|x\|\mid f(x)=0\}$, which quantifies how interior the origin is to the set $\{x \mid f(x)>0\}$. In this light, the constants in this rate agree with those in the guarantees of~\cite{Grimmer2017-radial-subgradient}, up to small constants. 

The classic convergence rates of the subgradient method improve in the presence of growth conditions like~\eqref{eq:Lojasiewicz-sub} or~\eqref{eq:sharp-lower-bound}. For example growth with exponent $\theta=1/2$ corresponds to the case of quadratic growth (generalizing strong convexity) and leads to faster $O(1/\epsilon)$ convergence, see~\cite{Bach2012} as a simple example. When $\theta=0$, sharp growth enables the classic subgradient method to converge linearly, as shown by Polyak~\cite{Polyak1969,Polyak1979} more than 50 years ago. 
Recalling that these quantities are preserved from primal to radial dual (Propositions~\ref{prop:lojasiewicz} and~\ref{prop:sharpness}), we find the same improvements to hold for our radial subgradient method. The following two theorems establish this speed up when $\theta=0$ and $\theta>0$, using the radially dual Polyak stepsize $\alpha_k=(f^\Gamma(y_k)-d^*)/\|\zeta_k'\|^2$.
\begin{theorem}\label{thm:sharp-convex-radial-subgradient}
	Consider any upper semicontinuous, concave $f$ with $R(f)>0$ and $p^* = \sup f\in\RR_{++}$ attained at $x^*\in\varSpace$. Fixing $\alpha_k=(f^\Gamma(y_k)-d^*)/\|\zeta_k'\|^2$, if $f$ satisfies the sharp growth condition~\eqref{eq:sharp-upper-bound}, then the radial subgradient method (Algorithm~\ref{alg:radial-subgradient-method}) has $x_k=y_k/f^\Gamma(y_k)$ satisfy
	$$ T \geq 4\left(\frac{p^*+C\|x^*\|}{CR(f)}\right)^2\log_2\!\left(\frac{p^*-f(x_0)}{f(x_0)\epsilon}\right)\!\implies\!\min_{k<T}\!\left\{\frac{p^*-f(x_k)}{f(x_k)}\right\}\leq \epsilon. $$
\end{theorem}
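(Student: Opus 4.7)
The plan is to run a Polyak-style linear convergence analysis in the radial dual and then pull the resulting geometric decay back to the primal. Before the main analysis, I would verify that $f^\Gamma$ inherits all the needed structure: since $R(f)>0$ and $f$ is concave, $f$ is strictly upper radial via~\eqref{eq:concave-strict-radial-NS-condition}, so by~\eqref{eq:convexity-duality} and~\eqref{eq:minimizer-conversion} the dual $f^\Gamma$ is convex with minimum value $d^*=1/p^*$ attained at $y^*=x^*/p^*$; by Proposition~\ref{prop:Lipschitz} it is $1/R(f)$-Lipschitz; and by Proposition~\ref{prop:sharpness} it inherits the sharp lower bound $f^\Gamma(y)-d^*\geq C'\|y-y^*\|$ with $C' := C/(p^*+C\|x^*\|)$.

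Next I would apply the classical subgradient identity with the Polyak stepsize: expanding $\|y_{k+1}-y^*\|^2$ and invoking the convex subgradient inequality produces
$$\|y_{k+1}-y^*\|^2 \leq \|y_k-y^*\|^2 - (f^\Gamma(y_k)-d^*)^2/\|\zeta_k'\|^2.$$
Substituting the sharp lower bound into the numerator and the Lipschitz bound $\|\zeta_k'\|\leq 1/R(f)$ into the denominator yields the geometric contraction
$$\|y_{k+1}-y^*\|^2 \leq (1-r)\,\|y_k-y^*\|^2, \qquad r := (C'R(f))^2,$$
which iterates to $\|y_T-y^*\|^2 \leq (1-r)^T\|y_0-y^*\|^2$.

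Then I would convert the dual distance bound into a primal relative-accuracy bound. Exactly as in the proof of Theorem~\ref{thm:convex-radial-subgradient}, the identity $(x_k,u_k)=\Gamma(y_k,f^\Gamma(y_k))$ combined with $f=f^{\Gamma\Gamma}$ gives $f(x_k)\geq 1/f^\Gamma(y_k)$ and hence $(p^*-f(x_k))/f(x_k) \leq p^*(f^\Gamma(y_k)-d^*)$. Applying $1/R(f)$-Lipschitzness at the final iterate and the sharp lower bound on the initial iterate, $\|y_0-y^*\|\leq (f^\Gamma(y_0)-d^*)/C' = (p^*-f(x_0))/(C'p^*f(x_0))$, chains into
$$\frac{p^*-f(x_T)}{f(x_T)} \leq \frac{1}{C'R(f)}\cdot\frac{p^*-f(x_0)}{f(x_0)}\cdot (1-r)^{T/2}.$$
Requiring this to be at most $\epsilon$ and using the elementary estimate $\log(1/(1-r))\geq r$ produces a sufficient iteration count of the form $(\text{const}/r)\log(\cdots/\epsilon)$, matching the stated bound $T\geq (4/r)\log_2((p^*-f(x_0))/(f(x_0)\epsilon))$ up to a slack constant.

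The main obstacle is the bookkeeping of constants: the ratio $L/C' = 1/(C'R(f)) = 1/\sqrt{r}$ appears both as a multiplicative prefactor on $(p^*-f(x_0))/f(x_0)$ in the last display and, implicitly, inside the contraction rate $r$. The stated factor of $4$ (instead of $2$) together with the use of $\log_2$ in place of $\ln$ is designed to absorb both the change of logarithm base and the $\log(L/C')=\tfrac{1}{2}\log(1/r)$ term arising when the prefactor is pulled inside the logarithm, reducing the final cleanup to a standard calculation using $(1-r)^T\leq 2^{-Tr/\ln 2}$.
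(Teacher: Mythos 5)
Your recursion $\|y_{k+1}-y^*\|^2\leq(1-r)\|y_k-y^*\|^2$ with $r=(C'R(f))^2$ is correct, but this is a genuinely different route from the paper's, and as written it does not reach the stated iteration bound. The paper does not contract distances: it plugs the Polyak step into~\eqref{eq:subgradient-base} to bound the sum of squared dual gaps, invokes sharpness (Proposition~\ref{prop:sharpness}) exactly once to bound $\|y_0-y^*\|\leq(f^\Gamma(y_0)-d^*)/C'$, concludes that within $4/r$ iterations some iterate halves the gap, and then repeats that halving epoch. Because each epoch converts objective to distance only at its start and never distance back to objective, no prefactor ever accumulates in front of the final objective gap.

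Your argument converts distance to objective at the end via Lipschitzness and objective to distance at the start via sharpness; those two constants differ by a factor of $1/\sqrt{r}=1/(C'R(f))$, so you obtain $\frac{p^*-f(x_T)}{f(x_T)}\leq\frac{1}{\sqrt{r}}\cdot\frac{p^*-f(x_0)}{f(x_0)}\cdot(1-r)^{T/2}$, which needs $T\geq\frac{2}{r}\ln\frac{\Delta}{\epsilon}+\frac{1}{r}\ln\frac{1}{r}$ with $\Delta=(p^*-f(x_0))/f(x_0)$. The theorem's hypothesis $T\geq\frac{4}{r}\log_2\frac{\Delta}{\epsilon}\approx\frac{5.77}{r}\ln\frac{\Delta}{\epsilon}$ implies your requirement only when $3.77\,\ln\frac{\Delta}{\epsilon}\geq\ln\frac{1}{r}$, which fails whenever $\Delta/\epsilon$ is modest and $r$ is small --- these are independent problem-dependent quantities, so the slack in $4$ and in $\log_2$ cannot absorb the $\frac{1}{r}\ln\frac{1}{r}$ term in general. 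The "standard cleanup" you describe therefore does not close the gap: you would either need to state the weaker bound with that extra additive term, or switch to the paper's epoch-halving argument to recover the exact constant. (There is also a minor off-by-one point --- your bound controls $f^\Gamma(y_T)$ rather than $\min_{k<T}$ --- but that is easily patched; the prefactor mismatch is the substantive issue.)
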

\begin{proof} 
	Plugging the stepsize choice $\alpha_k=(f^\Gamma(y_k)-d^*)/\|\zeta_k'\|^2$ into~\eqref{eq:subgradient-base} implies
	\begin{equation} \label{eq:polyak-helper}
	\sum_{k=0}^{T-1} \frac{(f^\Gamma(y_k)-d^*)^2}{2} \leq \frac{\|y_0-y^*\|^2}{2R(f)^2}
	\end{equation}
	where $y^*=x^*/p^*$ and Proposition~\ref{prop:Lipschitz} is used to bound $\|\zeta'_k\|\leq 1/R(f)$. Then the radially dual sharpness bound from Proposition~\ref{prop:sharpness} guarantees $\|y_0-y^*\|\leq \frac{p^*+C\|x^*\|}{C}(f^\Gamma(y_0)-d^*)$. Hence
	$$
	\frac{1}{T}\sum_{k=0}^{T-1} (f^\Gamma(y_k)-d^*)^2 \leq \frac{(p^*+C\|x^*\|)^2(f^\Gamma(y_0)-d^*)^2}{C^2R(f)^2T}.
	$$
	Therefore some $k\leq 4\left(\frac{p^*+C\|x^*\|}{CR(f)}\right)^2$ has halved the dual objective gap, $f^\Gamma(y_k)-d^* \leq (f^\Gamma(y_0)-d^*)/2$. Repeatedly applying this, we conclude that 
	$$ T \geq 4\left(\frac{p^*+C\|x^*\|}{CR(f)}\right)^2\log_2\left(\frac{f^\Gamma(y_0)-d^*}{\epsilon'}\right)$$
	implies $\min_{k<T}\left\{f^\Gamma(y_k)-d^*\right\}\leq \epsilon'$ for any $\epsilon'>0$.
	Considering $\epsilon'= \epsilon/p^*$ gives the claimed linear convergence rate. \qed
\end{proof}
This generalizes the linear convergence results of~\cite{Renegar2016} for linear programming. To the best of our knowledge, this is the first first-order method linear convergence guarantee for generic non-Lipschitz, sharp convex optimization.
\begin{theorem}\label{thm:growth-convex-radial-subgradient}
	Consider any upper semicontinuous, concave $f$ with $R(f)>0$ and $p^* = \sup f\in\RR_{++}$ attained at $x^*\in\varSpace$. Fixing $\alpha_k=(f^\Gamma(y_k)-d^*)/\|\zeta_k'\|^2$, if $f$ satisfies the {\L}ojasiewicz condition~\eqref{eq:Lojasiewicz-sup} with exponent $\theta>0$, then the radial subgradient method (Algorithm~\ref{alg:radial-subgradient-method}) has $x_k=y_k/f^\Gamma(y_k)$ satisfy
	$$T \geq O\left(1/\epsilon^{2\theta}\right) \quad \implies \quad \min_{k<T}\left\{\frac{p^*-f(x_k)}{f(x_k)}\right\}\leq \epsilon. $$
\end{theorem}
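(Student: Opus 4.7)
The plan is to extend the halving argument of Theorem~\ref{thm:sharp-convex-radial-subgradient} from the sharp case ($\theta=0$, linear convergence) to a general {\L}ojasiewicz exponent $\theta>0$. The only piece of that earlier proof that truly relies on sharpness is the linear error bound $\|y_0-y^*\|\leq\frac{p^*+C\|x^*\|}{C}(f^\Gamma(y_0)-d^*)$ coming from Proposition~\ref{prop:sharpness}; the goal is to replace this with a H\"olderian error bound inherited from Proposition~\ref{prop:lojasiewicz}, pay a geometric-series price, and conclude by summing over successive halvings.

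First I would apply Proposition~\ref{prop:lojasiewicz} to transfer the {\L}ojasiewicz condition with exponent $\theta$ to $f^\Gamma$ at $y^*=x^*/p^*$ on some neighborhood $B(y^*,r')$. Combining this with convexity of $f^\Gamma$ from~\eqref{eq:convexity-duality}, the standard equivalence between the {\L}ojasiewicz inequality and H\"olderian error bounds for convex functions (the nontrivial direction of which uses a KL-type desingularization argument integrating along a subgradient curve) yields
\[f^\Gamma(y)-d^* \;\geq\; c\,\|y-y^*\|^{1/(1-\theta)}\]
on some (possibly smaller) neighborhood, for a constant $c>0$. The Fej\'er monotonicity $\|y_{k+1}-y^*\|\leq\|y_k-y^*\|$ of the Polyak iterates, which is immediate from the descent inequality already used in the proof of Theorem~\ref{thm:sharp-convex-radial-subgradient}, keeps every iterate in this neighborhood whenever $y_0$ lies in it.

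Exactly as in the sharp case, the Polyak stepsize analysis gives the summation bound~\eqref{eq:polyak-helper}: $\sum_{k=0}^{T-1} F_k^2 \leq \|y_0-y^*\|^2/R(f)^2$, where $F_k:=f^\Gamma(y_k)-d^*$. The new ingredient is to invoke the H\"older growth above at the current ``starting point'' $y_0$, yielding $\|y_0-y^*\|^2\leq (F_0/c)^{2(1-\theta)}$ and hence
\[\min_{k<T} F_k \;\leq\; \frac{F_0^{\,1-\theta}}{c^{1-\theta}\,R(f)\,\sqrt{T}}.\]
So $T^{\star}=O(F_0^{-2\theta})$ iterations suffice to halve the dual gap from $F_0$ to $F_0/2$. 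Restarting this bound after each halving, the $j$-th halving takes $O(2^{2j\theta})$ iterations, and reaching dual accuracy $\epsilon'=\epsilon/p^*$ requires $J\approx\log_2(F_0/\epsilon')$ halvings, for a total of
\[T_{\mathrm{total}} \;=\; O\!\left(F_0^{-2\theta}\sum_{j=0}^{J-1}2^{2j\theta}\right) \;=\; O(\epsilon'^{-2\theta}) \;=\; O(1/\epsilon^{2\theta}),\]
where the geometric sum is finite precisely because $\theta>0$. The conversion from the dual accuracy $\epsilon'$ to the claimed relative primal accuracy follows the same steps used at the end of the proof of Theorem~\ref{thm:convex-radial-subgradient}.

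The main obstacle is the convex {\L}ojasiewicz $\Rightarrow$ H\"older-growth passage: while standard in the convex analysis literature, the nontrivial direction is not superficial, so this step would need to be either carefully cited or re-derived. Everything else is a routine adaptation of the sharp-case argument, with the geometric series over successive halvings being the only nontrivial combinatorial estimate and the $\theta>0$ hypothesis entering precisely to keep this series summable.
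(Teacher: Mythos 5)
Your plan matches the paper's proof step for step: transfer the {\L}ojasiewicz condition to $f^\Gamma$ via Proposition~\ref{prop:lojasiewicz}, integrate it to get the local H\"older error bound (the paper cites~\cite[Theorem 5]{Bolte2017} for exactly this desingularization step), use~\eqref{eq:polyak-helper} with the error bound to halve the dual gap in $O(F^{-2\theta})$ iterations, invoke Fej\'er monotonicity of the Polyak iterates, sum the resulting geometric series, and convert to the primal via $\epsilon'=\epsilon/p^*$.

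One step is left unresolved, though: your argument only applies the local error bound at a ``starting point'' inside $B(y^*,r')$, and you note the Fej\'er monotonicity keeps iterates there ``whenever $y_0$ lies in it.'' But $y_0$ is not assumed to lie in that neighborhood, so the first halving estimate is not justified as written. The paper closes this gap by first showing (via~\eqref{eq:polyak-helper}) that some iterate $y_{k_0}$ must enter $B(y^*,r')$ within $k_0\leq\bigl(\|y_0-y^*\|/((C'(1-\theta)r')^{1/(1-\theta)}R(f))\bigr)^2$ iterations, and only then restarting the halving analysis at $y_{k_0}$; Fej\'er monotonicity then keeps all later iterates in the ball. Since $k_0$ is a constant independent of $\epsilon$, adding this front-end phase does not change the $O(1/\epsilon^{2\theta})$ rate, so your proof is fixable by inserting this one observation.
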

\begin{proof}
	By Proposition~\ref{prop:lojasiewicz}, the {\L}ojasiewicz condition~\eqref{eq:Lojasiewicz-sub} holds at the dual minimizer $y^*=x^*/p^*$ for some constants $r', C'$ with the same exponent $\theta$. Integrating this condition (as done in~\cite[Theorem 5]{Bolte2017}) ensures every $y\in B(y^*,r')$ has the following local error bound 
	\begin{equation} \label{eq:local-error-bound}
	f^\Gamma(y) - d^* \geq \left(C'(1-\theta)\|y-y^*\|\right)^{1/(1-\theta)}.
	\end{equation}
	The subgradient method must have some $y_{k_0}$ in the ball $B(y^*,r')$ with
	$$ k_0 \leq \left(\frac{\|y_0-y^*\|}{\left(C'(1-\theta)r'\right)^{1/(1-\theta)}R(f)}\right)^2 $$
	since~\eqref{eq:polyak-helper} ensures the average iterate has objective gap squared at most $\left(C'(1-\theta)r'\right)^{2/(1-\theta)}$.	
	Notice that the Polyak stepsize ensures the distance from the iterates $y_k$ to $y^*$ is nonincreasing as
	\begin{align*}
	\|y_{k+1}-y^*\|^2 & = \|y_{k}-y^*\|^2 -2\alpha_k\zeta_k'^T(y_k-y^*) + \alpha_k^2\|\zeta_k'\|^2\\
	&\leq \|y_{k}-y^*\|^2 -2\alpha_k(f^\Gamma(y_k)-d^*) + \alpha_k^2\|\zeta_k'\|^2\\
	&\leq \|y_{k}-y^*\|^2 - \frac{(f^\Gamma(y_k)-d^*)^2}{\|\zeta_k'\|^2} \leq \|y_{k}-y^*\|^2.
	\end{align*}
	Hence all $k\geq k_0$ have $y_k\in B(y^*, r')$ as well. Then our claimed convergence rate follows by bounding the number of iterations required to ensure the objective gap halves $f^\Gamma(y_{k_0+k})-d^* \leq (f^\Gamma(y_{k_0})-d^*)/2$. Applying the local error bound~\eqref{eq:local-error-bound} to~\eqref{eq:polyak-helper} initialized at $y_{k_0}$ implies
	$$
	\frac{1}{T}\sum_{k=0}^{T-1} (f^\Gamma(y_{k_0+k})-d^*)^2 \leq \frac{(C'(1-\theta))^2(f^\Gamma(y_{k_0})-d^*)^{2(1-\theta)}}{R(f)^2T}.
	$$
	{\color{blue} Thus at some $k_1\leq k_0 + 4\left(\frac{C'(1-\theta)}{R(f)}\right)^2/(f^\Gamma(y_{k_0})-d^*)^{2\theta}$, the radially dual objective gap must have halved. Inductively, let $k_{i+1} \leq k_i + 4\left(\frac{C'(1-\theta)}{R(f)}\right)^2/(f^\Gamma(y_{k_i})-d^*)^{2\theta}$ denote an iteration with half the dual objective value of $k_i$. Let $k_{j+1}$ denote the first of these iterations with $f^\Gamma(y_{k_{j+1}})-d^*$ less than a target accuracy $\epsilon'>0$. Then $f^\Gamma(y_{k_{i}})-d^* \geq 2^{j-i}\epsilon'$ for all $i\leq j$. Inductively applying the definition of $k_i$ implies 
		$$ k_{j+1} -k_0 \leq \sum_{i=0}^j 4\left(\frac{C'(1-\theta)}{R(f)}\right)^2\frac{1}{(2^{j-i} \epsilon')^{2\theta}} \leq \frac{4}{ 1-2^{-2\theta}}\left(\frac{C'(1-\theta)}{R(f)}\right)^2\left(\frac{1}{\epsilon'}\right)^{2\theta}. $$
		Setting $\epsilon'=\epsilon/p^*$ ensures $\frac{p^*-f(x_{k_{j+1}})}{f(x_{k_{j+1}})} \leq \epsilon$.} \qed
\end{proof}

The previous pair of convergence theorems relied on using a Polyak stepsize, which requires the often impractical knowledge of $d^*$. This can be remedied by replacing the simple subgradient method in Algorithm~\ref{alg:radial-subgradient-method} with a more sophisticated stepping scheme like~\cite{Johnstone2017} or restarting scheme like~\cite{Yang2018,Roulet2020,RenegarGrimmer2018} which all attain similar convergence guarantees.

\subsection{Radial Smoothing Method}\label{subsec:smoothing}
Now we turn our attention to the radial smoothing method previously defined as Algorithm~\ref{alg:radial-smoothing-method} in the context of smoothing the radial dual of our quadratic program. {\color{blue} More generally, we consider maximizing a minimum of several smooth concave functions $f_j\colon \varSpace\rightarrow \mathbb{R}\cup\{-\infty\}$ with bounded level sets over polyhedral constraints $Ax\leq b$. Translating any known strictly feasible $x_0$ in the interior of the domain of $f_j$, we have $f_j(0)>0$ and $b>0$. Then we consider the equivalent nonnegative primal maximization problem
	\begin{equation} \label{eq:smoothing-primal}
	p^* = \begin{cases}
	\max_x & \min\{(f_j)_+(x) \mid j=1,\dots, m_1\}\\
	\mathrm{s.t.}& a_i^Tx\leq b_i \quad \text{ for } i=1,\dots,m_2
	\end{cases}
	\end{equation}
	which has $R((f_j)_+)\geq R>0$ and $D((f_j)_+)\leq D<\infty$ and each $b_i>0$.}

Further, since each $f_j$ has bounded level sets, $f_j$ is $L$-smooth on the level set $\{ x \mid f_j(x)>0\}$ for some $\sup\{\|\nabla^2 f_j(x)\|\mid f(x)>0\}\leq L<\infty$. This objective is strictly upper radial with radial dual
\begin{equation} \label{eq:smoothing-dual}
d^* = \min_{y\in\varSpace} \max\left\{f^\Gamma_j(x), (a_i/b_i)^Ty \mid j\in\{1,\dots, m_1\}, i\in\{1,\dots, m_2\}\right\}.
\end{equation}

Then we consider the smoothing of this objective for any $\eta>0$ given by
\begin{equation}\label{eq:general-smoothing}
g_\eta(y) = \eta \log\left(\sum_{j=1}^{m_1}\exp\left(\frac{(f_j)^\Gamma_+(y)}{\eta}\right)+\sum_{i=1}^{m_2}\exp\left(\frac{a_i^Ty}{b_i\eta}\right)\right).
\end{equation}
Our radial smoothing method (Algorithm~\ref{alg:radial-smoothing-method}) proceeds by minimizing this smoothing with Nesterov's accelerated method to produce a radially dual solution with accuracy $O(\eta)$. Nearly any other fast iterative method could be employed here instead, which could then avoid needing knowledge of problem constants.
Converting this radial dual guarantee back to the primal problem gives the following primal convergence theorem.
\begin{theorem}
	Consider any problem of the form~\eqref{eq:smoothing-primal}. Fixing $L_\eta=(1+D/R)^3L+\frac{\max\{1/R^2,\|a_i/b_i\|\}}{\eta}$ and $x_0=0$, the radial smoothing method (Algorithm~\ref{alg:radial-smoothing-method}) has $x_k=y_k/ \max\{(f_j)^\Gamma_+(y_k), (a_i/b_i)^Ty_k\}$ feasible with
	\begin{align*}
	\frac{p^*-\min\{f_j(x_k)\}}{\min\{f_j(x_k)\}} \leq \frac{2L_\eta(1+\eta p^*\log(m_1+m_2))^2D^2}{p^*(k+1)^2} + \eta p^*\log(m_1+m_2).
	\end{align*}
	Setting $\eta=\epsilon/2\log(m_1+m_2)$ ensures the following $O(1/\epsilon)$ convergence rate
	\begin{align*}
	&k+1 \geq 2(1+ p^*\epsilon/2)D\sqrt{\frac{(1+D/R)^3L}{p^*\epsilon}+\frac{2\max\{1/R^2,\|a_i/b_i\|^2\}\log(m_1+m_2)}{p^*\epsilon^2}}\\
	& \implies \quad \frac{p^*-\min\{f_j(x_k)\}}{\min\{f_j(x_k)\}} \leq p^*\epsilon.
	\end{align*}
\end{theorem}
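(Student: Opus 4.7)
My plan has three ingredients: (i) the uniform approximation $f^\Gamma(y) \leq g_\eta(y) \leq f^\Gamma(y) + \eta\log(m_1+m_2)$ of the log-sum-exp smoothing; (ii) $L_\eta$-smoothness of $g_\eta$ derived from Corollary~\ref{cor:smoothness} plus the standard soft-max Hessian calculation; and (iii) Nesterov's accelerated convergence rate applied at $y_0 = 0$, combined with a carefully argued bound on $\|y^*_\eta\|$ for a minimizer $y^*_\eta$ of $g_\eta$.

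For the smoothness constant, each $f_j^\Gamma$ is $(1+D/R)^3 L$-smooth by Corollary~\ref{cor:smoothness} and $1/R$-Lipschitz by Proposition~\ref{prop:Lipschitz}, while each linear term $(a_i/b_i)^T y$ is $0$-smooth with gradient norm $\|a_i/b_i\|$. The standard Hessian bound for $g_\eta(y)=\eta\log(\sum_j e^{h_j(y)/\eta})$ used in~\cite{Nesterov2005,Beck2012} then yields the claimed $L_\eta$. Primal feasibility of $x_k = y_k/f^\Gamma(y_k)$, i.e., $a_i^T x_k\leq b_i$ for every $i$, follows directly from $f^\Gamma(y_k) \geq (a_i/b_i)^T y_k$.

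The main obstacle is bounding $\|y^*_\eta\|$, since no growth condition on $f^\Gamma$ is available to us. The key trick is to exploit the domain bijection~\eqref{eq:domain-bijection}: since $y^*_\eta/f^\Gamma(y^*_\eta) \in \dom f$, the definition of $D$ gives $\|y^*_\eta\| \leq D\cdot f^\Gamma(y^*_\eta)$. Evaluating the approximation bound at the primal-dual minimizer $y^* = x^*/p^*$ then gives
\[
f^\Gamma(y^*_\eta) \leq g_\eta(y^*_\eta) \leq g_\eta(y^*) \leq f^\Gamma(y^*) + \eta\log(m_1+m_2) = \frac{1+\eta p^*\log(m_1+m_2)}{p^*},
\]
yielding $\|y^*_\eta\| \leq D(1+\eta p^*\log(m_1+m_2))/p^*$. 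Existence of $y^*_\eta$ follows from coercivity of $g_\eta$, which is implied by the boundedness of the primal feasible region assumed via $D(f_j) < \infty$.

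Finally, Nesterov's accelerated bound $g_\eta(y_k) - g_\eta(y^*_\eta) \leq 2L_\eta\|y^*_\eta\|^2/(k+1)^2$ combined with $g_\eta(y^*_\eta) - d^* \leq \eta\log(m_1+m_2)$ gives
\[
f^\Gamma(y_k) - d^* \leq \frac{2L_\eta D^2(1+\eta p^*\log(m_1+m_2))^2}{(p^*)^2(k+1)^2} + \eta\log(m_1+m_2),
\]
and multiplying through by $p^*$ using the conversion $(p^* - f(x_k))/f(x_k) \leq p^*(f^\Gamma(y_k)-d^*)$ (which follows from $f(x_k) \geq 1/f^\Gamma(y_k)$ by the upper-graph bijection~\eqref{eq:upper-graph-bijection}) yields the first claimed primal bound. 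The $O(1/\epsilon)$ rate then follows by choosing $\eta = \epsilon/(2\log(m_1+m_2))$, which fixes the additive term at $p^*\epsilon/2$ and leaves the first term $\leq p^*\epsilon/2$ once $k+1$ meets the stated threshold obtained by isolating $k+1$ in the first term.
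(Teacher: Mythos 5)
Your proof is correct and follows essentially the same approach as the paper. The only cosmetic difference is in how you bound $\|y^*_\eta\|$: you invoke the domain bijection~\eqref{eq:domain-bijection} to get $\|y^*_\eta\| \leq D\,f^\Gamma(y^*_\eta)$, whereas the paper equivalently uses the epigraph bijection~\eqref{eq:upper-graph-bijection} to bound the dual sub-level sets; both reduce to the same geometric fact that $\dom f$ is bounded by $D$, and both then bound $f^\Gamma(y^*_\eta) \leq d^* + \eta\log(m_1+m_2)$ via the smoothing gap, so the resulting $\|y^*_\eta\| \leq D(d^*+\eta\log(m_1+m_2))$ and all downstream steps (Nesterov's rate, smoothing gap, primal conversion via $f(x_k)\geq 1/f^\Gamma(y_k)$) match the paper.
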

\begin{proof}
	Observe that all of the $m_1+m_2$ functions defining $g_\eta$ are convex (by~\eqref{eq:convexity-duality}), $\max\{1/R, \|a_i/b_i\|\}$-Lipschitz continuous (by Proposition~\ref{prop:Lipschitz}) and $(1+D/R)^3L$-smooth (by Corollary~\ref{cor:smoothness}).
	Then~\cite[Proposition 4.1]{Beck2012} ensures $g_\eta$ is convex, is $(1+D/R)^3L+\frac{\max\{1/R^2,\|a_i/b_i\|\}}{\eta}$-smooth, and closely follows the radially dual objective with every $y\in\varSpace$ satisfying
	\begin{equation}\label{eq:smoothing-gap}
	0\leq g_\eta(y) - \max\left\{(f_j)^\Gamma_+(y), (a_i/b_i)^Ty \right\} \leq \eta\log(m_1+m_2).
	\end{equation}
	Note that for any $s>0$, the related primal super-level set is bounded by
	$$\sup\{\|x\| \mid f_j(x)\geq s,\ a_i^Tx\leq b_i \} \leq D. $$
	Recalling $\epi f^\Gamma = \Gamma(\hypo f)$ from~\eqref{eq:upper-graph-bijection} bounds every dual sub-level set by
	$$\sup\{\|y\| \mid f^\Gamma_j(y)\leq 1/s,\ (a_i/b_i)^Ty\leq 1/s \} \leq D/s. $$
	In particular, considering $s=p^*=1/d^*$ shows every radial dual minimizer has norm bounded by $d^*D$. Then the upper bound from~\eqref{eq:smoothing-gap} ensures the $d^*+\eta\log(m_1+m_2)$ sub-level set of $g_\eta$ is nonempty and the lower bound from~\eqref{eq:smoothing-gap} allows us to bound this level set by
	$$\sup\{\|y\| \mid g_\eta(y) \leq d^*+\eta\log(m_1+m_2)\} \leq (d^*+\eta\log(m_1+m_2))D$$
	Therefore the distance from $y_0=0$ to a minimizer of $g_\eta$ is at most $(d^*+\eta\log(m_1+m_2))D$.
	
	Since $g_\eta$ is smooth and has a minimizer, applying the standard accelerated method convergence guarantee~\cite{Nesterov1983} guarantees the iterates of our radial smoothing method have
	$$ g_\eta(y_k) -\inf g_\eta \leq \frac{2L_\eta(d^*+\eta\log(m_1+m_2))^2D^2}{(k+1)^2}. $$
	Converting this guarantee in terms of our radially dual objective,~\eqref{eq:smoothing-gap} ensures
	\begin{align*}
	&\max\left\{(f_j)^\Gamma_+(y_k), (a_i/b_i)^Ty_k\right\} - d^* \\
	&\qquad \leq \frac{2L_\eta(d^*+\eta\log(m_1+m_2))^2D^2}{(k+1)^2} + \eta \log(m_1+m_2).
	\end{align*}
	Stating this to be in terms of $x_k=y_k/ \max\{(f_j)^\Gamma_+(y_k), (a_i/b_i)^Ty_k\}$ yields
	\begin{align*}
	&\frac{p^*-\min\{f_j(x_k)\}}{\min\{f_j(x_k)\}}\leq \frac{2L_\eta(1+\eta p^*\log(m_1+m_2))^2D^2}{p^*(k+1)^2} + \eta p^*\log(m_1+m_2). \tag*{\qed}
	\end{align*}
\end{proof}
Renegar~\cite{Renegar2019} uses the same general technique to give accelerated convergence guarantees for solving the broad family of hyperbolic programming problems (which includes semidefinite programming) where the radial dual also admits a natural smoothing.
The restarting schemes of~\cite{Roulet2020} and~\cite{RenegarGrimmer2018} both explicitly consider restarting smoothing methods to attain improved convergence when growth conditions like the {\L}ojasiewicz condition~\eqref{eq:Lojasiewicz-sub} hold. Due to Proposition~\ref{prop:lojasiewicz}, applying these more sophisticated methods to solve the radially dual problem will give rise to radial algorithms that enjoy the same improved convergence. The analysis of such a method should follow similarly to Theorem~\ref{thm:growth-convex-radial-subgradient}.

\subsection{Radial Accelerated Method}\label{subsec:accel}
Motivated by our example transforming the Poisson likelihood problem~\eqref{eq:poisson-objective}, algorithms can be designed to take advantage of the radial transformation extending smoothness on a level set to hold globally. Consider maximizing any twice differentiable concave function $f\colon \varSpace \rightarrow \RR\cup\{-\infty\}$ with bounded level sets. Then, without loss of generality, we have $0\in\interior \{x\mid f(x)>0\}$ and so $f_+$ is strictly upper radial. Letting $L=\sup\{\|\nabla^2 f(x)\|\mid f(x)>0\}$, Corollary~\ref{cor:smoothness} ensures $f^\Gamma_+$ is $(1+D(f)/R(f))^3L$-smooth on all of $\varSpace$. Hence $f^\Gamma_+$ can be minimized directly using Nesterov's accelerated method, giving the following {\it radial accelerated method} defined by Algorithm~\ref{alg:radial-accel-method}. This radial algorithm inherits the primal accelerated method's $O(\sqrt{L\dist(x_0,X^*)^2/\epsilon})$ rate, only requiring $L$-smoothness on the level set $\{x \mid f(x)>0\}$.
\begin{algorithm}[H] 
	\caption{The Radial Accelerated Method} 
	\label{alg:radial-accel-method} 
	\begin{algorithmic}[1] 
		\REQUIRE $f\colon \varSpace \rightarrow \extPos$, $x_0\in\dom f$, $L>0$, $T\geq 0$
		\STATE $(y_0, v_0) = \Gamma (x_0, f(x_0))$ and $\tilde y_0=y_0$ \hfill {\it Transform into the radial dual}
		\FOR{$k=0\dots T-1$}
		\STATE $\tilde y_{k+1} = y_k - \nabla f^\Gamma(y)/(1+D(f)/R(f))^3L$ \hfill {\it Run the accelerated method}
		\STATE $y_{k+1} = \tilde y_{k+1} + \frac{k-1}{k+2}(\tilde y_{k+1}-\tilde y_k)$ 
		\ENDFOR
		\STATE $(x_T, u_T) = \Gamma (y_T, \radTransSup{f}(y_T))$  \hfill {\it Transform back to the primal}
	\end{algorithmic}
\end{algorithm}

\begin{theorem}\label{thm:radial-accelerated}
	Consider any twice differentiable, concave $f$ with $R(f)>0$, $D(f)<\infty$, and $p^* = \sup f\in\RR_{++}$ attained on $X^*\subseteq\varSpace$. Fixing $x_0=0$, the radial accelerated method (Algorithm~\ref{alg:radial-accel-method}) has for any $\epsilon>0$,
	$$ k+1 \geq (1+D(f)/R(f))^{3/2}\sqrt{\frac{2L\dist(x_0,X^*)^2}{p^*\epsilon}} \quad \implies \quad \frac{p^*-f(x_k)}{f(x_k)}\leq \epsilon. $$
\end{theorem}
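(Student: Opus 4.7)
The plan is to reduce the claim to the standard convergence rate of Nesterov's accelerated method applied to the radially dual objective $f^\Gamma$, and then translate the resulting dual accuracy back into the relative primal accuracy via the radial correspondence, much as was done for Theorem~\ref{thm:convex-radial-subgradient}.

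First I would verify that $f^\Gamma$ satisfies the hypotheses needed to invoke the accelerated method's rate. Since $f$ is concave with $R(f)>0$, condition~\eqref{eq:concave-strict-radial-NS-condition} ensures $f$ is strictly upper radial, so~\eqref{eq:convexity-duality} gives convexity of $f^\Gamma$ and~\eqref{eq:minimizer-conversion} identifies its minimizers as $Y^* = X^*/p^*$. The hypothesis $D(f)<\infty$ together with boundedness of the Hessian on $\{x\mid f(x)>0\}$ puts us in the regime of Corollary~\ref{cor:smoothness}, which globally extends smoothness: $f^\Gamma$ has $(1+D(f)/R(f))^3L$-Lipschitz gradient on all of $\varSpace = \dom f^\Gamma$. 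Thus Nesterov's standard guarantee yields
\[
f^\Gamma(\tilde y_k) - d^* \;\leq\; \frac{2(1+D(f)/R(f))^3 L\,\|y_0 - y^*\|^2}{(k+1)^2}
\]
for any $y^*\in Y^*$, where $d^* = \inf f^\Gamma = 1/p^*$.

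Next I would translate the two problem-dependent quantities in this bound into their primal counterparts. Since $x_0 = 0$, the transform $(y_0,v_0) = \Gamma(x_0,f(x_0)) = (0,1/f(0))$ forces $y_0 = 0$, and therefore $\|y_0 - y^*\| = \|y^*\|$ for any $y^*\in Y^*$; taking $y^* = x^*/p^*$ for the nearest $x^*\in X^*$ gives $\|y_0-y^*\| = \dist(x_0,X^*)/p^*$. On the objective side, for the primal iterate $x_k = y_k/f^\Gamma(y_k)$ the radial duality inequality $f(x_k)\geq 1/f^\Gamma(y_k)$ (since $f = f^{\Gamma\Gamma}$) gives
\[
\frac{p^* - f(x_k)}{f(x_k)} \;=\; \frac{p^*}{f(x_k)} - 1 \;\leq\; p^*\, f^\Gamma(y_k) - 1 \;=\; p^*\bigl(f^\Gamma(y_k) - d^*\bigr).
\]
Combining these two conversions with the accelerated rate yields
\[
\frac{p^* - f(x_k)}{f(x_k)} \;\leq\; \frac{2(1+D(f)/R(f))^3 L\,\dist(x_0,X^*)^2}{p^*\,(k+1)^2},
\]
and solving for $k$ so that the right-hand side is at most $\epsilon$ gives precisely the claimed iteration complexity.

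I do not anticipate a serious obstacle: the two nontrivial ingredients (global smoothness of $f^\Gamma$ and the primal-to-dual optimality conversion) are both already packaged by Corollary~\ref{cor:smoothness} and by the bijection $1/f^\Gamma(y) \leq f(y/f^\Gamma(y))$ respectively. The only fine point worth writing out carefully is that the guarantee is stated for the averaged/last accelerated iterate $\tilde y_k$ produced in Algorithm~\ref{alg:radial-accel-method}, so one should be explicit about which $y_k$ plays the role of the query point used to form $x_k$; this is a notational matter handled exactly as in Theorem~\ref{thm:convex-radial-subgradient}.
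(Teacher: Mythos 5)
Your proposal is correct and takes essentially the same route as the paper's proof: verify convexity of $f^\Gamma$ via~\eqref{eq:convexity-duality}, obtain global $(1+D(f)/R(f))^3L$-smoothness via Corollary~\ref{cor:smoothness}, invoke Nesterov's classic rate, and then convert the dual objective gap to the relative primal gap using $f(x_k)\geq 1/f^\Gamma(y_k)$ together with $\dist(y_0,Y^*)=\dist(x_0,X^*)/p^*$. Your parenthetical remark about which iterate ($\tilde y_k$ versus $y_k$) actually carries the accelerated guarantee is a valid caution the paper glosses over, but it is a notational matter and does not affect the result.
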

\begin{proof}
	Recall the $f^\Gamma$ is convex by~\eqref{eq:convexity-duality} and is $(1+D(f)/R(f))^3L$-smooth by Corollary~\ref{cor:smoothness}. Then Nesterov's classic analysis~\cite{Nesterov1983} ensures
	$$ f^\Gamma(y_k) -d^* \leq \frac{2(1+D(f)/R(f))^3L\dist(y_0, Y^*)^2}{(k+1)^2} $$	
	where $Y^*=X^*/p^*$. Letting $(x_k,u_k)=\Gamma(y_k,v_k)$ yields primal iterates with $ f(x_k)\geq 1/f^\Gamma(y_k)$. Then multiplying through by $1/d^*=p^*$ produces
	$$ \frac{p^*-f(x_k)}{f(x_k)}\leq \frac{2(1+D(f)/R(f))^3L\dist(y_0/d^*, X^*)^2}{p^*(k+1)^2}. $$	
	Noting that $y_0/d^*=x_0=0$, this gives the claimed convergence guarantee. \qed
\end{proof}
A few remarks on this convergence result. The additional coefficient of $(1+D(f)/R(f))^{3/2}$ is quite pessimistic as many of the examples we have considered have radial dual smoother than the primal, but Corollary~\ref{cor:smoothness} fails to capture this potential upside in its $O(L)$ bound. For particular applications, we expect much tighter bounds on the radially dual smoothness are possible.
The proposed radial accelerated method unrealistically relies on knowledge of our smoothness constant upper bound $(1+D(f)/R(f))^3 L$. However, this can be remedied by including a linesearch/backtracking as done in~\cite{Beck2009,Nesterov2015}.

Under growth conditions, the convergence of accelerated methods also improves. For example, applying the adaptive accelerated gradient method of~\cite{Liu2017} to solve the radially dual problem would give a radial method that speeds up in the presence of primal growth conditions by Proposition~\ref{prop:lojasiewicz}. The analysis of such a method should follow similarly to that of Theorem~\ref{thm:growth-convex-radial-subgradient}.
	\section{Radial Algorithms for Nonconcave Maximization}\label{sec:nonconvex}
Our radial duality theory applies beyond concave maximization problems, applying to the broader family of {\color{blue} nonconcave but upper radial maximization. Section~\ref{subsec:nc-obj} outlines several families of nonconvex settings where upper radiality holds and then Section~\ref{subsec:nc-algorithms} presents a performance guarantee for the radial subgradient method when maximizing a collection of such upper radial nonconcave functions.}  

\subsection{Examples of Radial Duality with Nonconvex Objectives or Constraints}\label{subsec:nc-obj}
{\color{blue} We say that a set $S\subseteq \varSpace$ is {\it star-convex with respect to the origin} if every $x\in S$ has the line segment $\lambda x\in S$ for all $0\leq \lambda\leq 1$. Geometrically, upper radial functions all have a star-convex{\color{blue}-like} hypograph with respect to the origin~\cite[Lemma 1]{Grimmer2021-part1}, meaning that all $(y,v)\in \hypo f$ have $\lambda(y,v)\in \hypo f$ for all $0 < \lambda\leq 1$\footnote{{\color{blue} Note this hypograph is not actually star convex since $(0,0)\not\in \hypo f$.}}.}
Star-convexity has been considered throughout the optimization literature. The structure of optimizing over star-convex constraint sets has been considered as early as~\cite{Rubinov1986}. In general, even linear optimization over star-convex bodies is NP-hard~\cite{Chandrasekaran2010}. Efficient global optimization of star-convex objectives is possible if star-convexity holds with respect to a global optimizer (see~\cite{Nesterov2006,Guminov2017,Lee2016,Guminov2019,Hinder2020}). \\

\noindent {\it Star-Convex Constraints.}
Star convexity w.r.t.~the origin is exactly the condition needed to ensure the {\color{blue} nonstandard} indicator $\hat\iota_{S}(x)= \begin{cases}
+\infty & \text{if\ } x\in S\\
0 & \text{if\ } x\not\in S.
\end{cases}$ is strictly upper radial\footnote{This is essentially by definition as $v\cdot \hat\iota_S(y/v)$ is nondecreasing in $v$ if and only if $S$ is star-convex w.r.t.~the origin. Then it is simple to check this function is upper semicontinuous and is vacuously strictly increasing on its effective domain $\dom\hat\iota_S = \emptyset$, which is empty.}. Then the radial dual of such a star-convex set's indicator function is given by the gauge
$$\hat\iota_S^\Gamma(y) = \sup\{v>0 \mid v\cdot \hat\iota_S(y/v)\leq 1\} = \sup\{v>0 \mid y/v\not\in S\}=\gamma_S(y).$$
Importantly, the gauge $\gamma_S(y)$ is convex if and only if $S$ is convex. As a result, algorithms utilizing the radial dual of star-convex constraints avoid needing difficult nonconvex orthogonal projections, replacing them with evaluating a nonconvex gauge function appearing in the objective.

One important example where star-convex sets arises comes from considering chance constraints~\cite{Haneveld2020,Nemirovski2007,Yuan2017}. Given some distribution over potential constraint sets $S_\xi\subseteq\varSpace$, a robust problem formulation may ensure that the constraint is satisfied with probability $\Lambda\in [0,1]$. Then the chance-constrained feasible region is $S=\{x \mid \mathbb{P}(x\in S_\xi) \geq \Lambda\}$. If each potential constraint set is convex with $0\in S_\xi$, then $S$ is star-convex w.r.t.~the origin.\\

\noindent {\it Optimization over Compact Sets.} Now we generalize our previous example from Section~\ref{sec:qp-example} where we saw that any nonconcave quadratic program with a compact polyhedral feasible region could be rescaled for our radial duality to apply. Consider maximizing any continuously differentiable function $f$ over a compact set $S$ that is star-convex w.r.t.~the origin. Supposing $f(0)>0$, this is equivalent to the following maximization problem of the primal form~\eqref{eq:base-problem}
$$ \max_{y\in \varSpace} \min\{ (1 + \lambda f(x))_+, \hat\iota_S(x)\} $$
for any $\lambda>0$. We check when this objective is strictly upper radial by considering whether its perspective function is strictly increasing on its domain:
\begin{align*}
v\cdot \min_i\left\{(1+\lambda f(y/v))_+,\ \hat\iota_{S}(y/v)\right\}
=\begin{cases}
\left(v+\lambda vf(y/v)\right)_+ & \text{ if } y/v\in S\\
0 & \text{ otherwise.}
\end{cases}
\end{align*}
The partial derivative of this with $v$ at any $y/v\in S\cap \dom (1+\lambda f)_+$ is
$$1 - \lambda (\nabla f(y/v),-1)^T(y/v, f(y/v)).$$
Noting that $(\nabla f(x),-1)^T(x, f(x))$ is a continuous function on the compact set $S\cap \dom (1+\lambda f)_+$, we can select $\lambda>0$ small enough to always have
$$1 - \lambda (\nabla f(y/v),-1)^T(y/v, f(y/v))>0.$$
Doing so makes our objective strictly upper radial and so radial duality applies.\\

\noindent {\it Nonconvex Regularization.} 
Many optimization tasks take the additive composite form
$ \max_{y\in\varSpace}  f(x) - r(x) $
where $f$ is an upper semicontinuous, concave function with $f(0)>0$ and $r(x)$ is an added (or rather subtracted since we are maximizing) regularization term. 
Many sparsity-inducing regularization penalties decompose as a sum over the $x$'s coordinates $r(x) = \sum_{i=1}^n \sigma(x_i)$ for some simple nonconvex function $\sigma:\RR\rightarrow\RR$. For example, $\ell_q$-regularization sets $\sigma(t)= \lambda|t|^q$ for some $0<q<1$, bridging the gap between $\ell_0$ and $\ell_1$-regularization. Many more regularizers are of this form, like SCAD regularization~\cite{Fan2001}, MCP~\cite{Zhang2010}, and firm thresholding~\cite{Gao1997}. See~\cite{Wen2018} for a wide survey. 

These regularizers are all continuous and have $r(y/v)$ nonincreasing in $v$. These two simple properties suffice to guarantee subtracting $r$ from $f$ will not break its upper radiality since
$$ v (f(y/v)-r(y/v))_+ = \max\{vf(y/v) - vr(y/v), 0\} $$
is a sum of two upper semicontinuous, nondecreasing functions in $v$. As a result, our radial duality applies to the nonconcave primal $(f(x)-r(x))_+$.\\

\noindent {\it Optimization with Outliers.}
Many learning problems take the form of minimizing a stochastic loss function $\EE_\xi \left[f(x,\xi)\right]$ using a finite sample approximation {\color{blue} with $f(\colon,\xi)\colon \varSpace\rightarrow \mathbb{R}\cup\{\infty\}$.} Given i.i.d.~samples $\xi_1,\dots,\xi_{s}$, this problem can be formulated as 
$$ \max_{x\in\varSpace}\frac{1}{s}\sum^s_{i=1} -f(x,\xi_i) . $$
If each $-f(\cdot, \xi_i)$ is concave and a point is known in the interior of each function's domain, a translation {\color{blue} can ensure $-f(0, \xi_i)>0$ for all $i$. Hence their sum is concave with a positive value at zero and so $\frac{1}{s}\left(\sum^s_{i=1} -f(x,\xi_i)\right)_+$ is upper radial.} Hence our radial duality can be applied. In the presence of $t$ outliers in the $s$ samples $\xi_1,\dots,\xi_{s}$, this finite sample approximation could be improved to only consider the loss function on the best $s-t$ samples
$$ \max_{x\in\varSpace} \max\left\{\frac{1}{s-t}\left(\sum_{i\in S} -f(x,\xi_i)\right)_+, \mid S\subseteq\{1...s\}, |S|=s-t\right\}. $$
{\color{blue} These partial sums are also concave with positive value at zero} and hence this whole objective is upper radial by~\cite[Corollary 2]{Grimmer2021-part1} and so our radial duality applies.
The minimax formulation of~\cite{Yu2014} exactly corresponds to this problem formulation at its equilibrium.
By the same corollary, our radial duality also applies to maximizing the $(s-t)$th largest element of $\{-f(x, \xi_i)\}^s_{i=1}$. Such an optimization problem captures the classic idea of least median of squares regression~\cite{Rousseeuw1984}.


\subsection{Example Nonconcave Guarantee for the Radial Subgradient Method}\label{subsec:nc-algorithms}
In this concluding section, we demonstrate the style of results possible from applying our radial duality to {\color{blue} upper radial }nonconcave maximization.
In particular, we consider the nonconcave, nonsmooth primal problem of maximizing the minimum of a set of twice continuously differentiable, strictly upper radial $f_j$ over some convex set $S\subseteq \varSpace$
\begin{equation} \label{eq:nc-primal}
p^* = \begin{cases}
\max_x & \min\{f_j(x) \mid j=1,\dots, m\}\\
\mathrm{s.t.}& x \in S
\end{cases} \quad = \max_{x\in\varSpace} \min\{f_j(x), \hat\iota_{S}(x)\}
\end{equation}
where each $f_j$ has $R(f_j)\geq R>0$ and bounded level sets $D(f_j)\leq D<\infty$ and the origin lies in the constraints with $B(0, R)\subseteq S$. Let $L\geq \sup\{\|\nabla^2 f_j(x)\| \mid f_j(x)>0, x\in S\}$ bound the smoothness of each $f_j$ on this compact level set.

This primal is strictly upper radial since each function defining the minimum is strictly upper radial. Then the radial dual of this problem is
\begin{equation}\label{eq:nc-dual}
d^* = \min_{y\in\varSpace} \max\{f_j^\Gamma(y), \gamma_S(y)\}.
\end{equation}
Note each $f^\Gamma_j(y)$ is convex if and only if $f_j$ is concave by~\eqref{eq:convexity-duality}. Hence if our primal~\eqref{eq:nc-primal} is nonconcave, our radial dual~\eqref{eq:nc-dual} will be nonconvex. Regardless, our previously proposed radial subgradient method (Algorithm~\ref{alg:radial-subgradient-method}) can still be applied and analyzed.

Recently, convergence theory for subgradient methods without convexity has been developed. Particularly, consider minimizing a nonconvex, nonsmooth function $g\colon \varSpace\rightarrow \RR$ that is bounded below. Then~\cite[Theorem 3.1]{Davis2018} ensures that provided $g$ is uniformly $M$-Lipschitz and $\rho$-weakly convex (defined as $g+\frac{\rho}{2}\|\cdot\|^2$ being convex), {\color{blue} the subgradient method $y_{k+1} = y_k - \alpha \zeta_k$ for $\zeta_k\in\partial_P g(y_k)$ has some $y_k$ that is nearly stationary on the Moreau envelope of $g$. In particular,~\cite[(3.9)]{Davis2018} this implies that proper selection\footnote{{\color{blue} Namely, given the method will be run for $T$ steps, the example analysis of~\cite{Davis2018} shows selecting $\alpha=\sqrt{\frac{g(y_0)-\inf g}{\rho M^2(T+1)}}$ suffices to give the claimed rate. Alternative stepsizes could be analyzed by the same proof technique proposed therein resulting in different assumptions on which parameters are known.}} of $\alpha$ ensures some $y_k$ has a nearby $y$ that is nearly stationary
	\begin{align}
	& T \geq \left\lceil\frac{16 \rho M^2(g(y_0)-\inf g)}{\epsilon^4}\right\rceil \nonumber\\
	& \implies \min_{k< T}\left\{\|y-y_k\|\right\}\leq 2\rho\epsilon \text{ with } \dist(0,\partial_P g(y))\leq \epsilon. \label{eq:nc-subgrad-rate}
	\end{align}
	
	Applying this to the radial dual allows us to ensure a nearly stationary point $y$ near a dual iterate $y_k$ exists. Then converting this guarantee back to the primal preserves the above $O(1/\epsilon^4)$ rate despite not assuming the primal~\eqref{eq:nc-primal} is either Lipschitz or weakly convex {\color{blue} (instead assuming it is strictly upper radial)}.
	\begin{theorem}
		Consider any problem of the form~\eqref{eq:nc-primal} with $p^*\in\RR_{++}$. Fixing $x_0=0$ and $\alpha_k=\epsilon/\|\zeta'_k\|^2 $, the radial subgradient method (Algorithm~\ref{alg:radial-subgradient-method}) with properly chosen constant stepsize $\alpha_k=\alpha$ has $x_k=y_k/ \max\{f^\Gamma_j(y_k), \gamma_S(y_k)\}$ satisfy
		\begin{align*}	
		&T\geq \left\lceil\frac{16(1+D/R)^3L(\min\{f_j(x_0)\}-p^*)}{R^2\min\{f_j(x_0)\}p^*\epsilon^4}\right\rceil\\
		&\implies \min_{k< T}\left\{\|x-x_k\|\right\}\leq 2p^*(1+D/R)^{4}L\epsilon\\
		&\qquad \ \text{ with \ } \dist(0,\partial^P \min\{f_j, \hat\iota_{S}\}(x))\leq \frac{p^*\epsilon}{1-\epsilon D}
		\end{align*}
		for some nearby $x\in\varSpace$ provided $0<\epsilon < 1/D$.
	\end{theorem}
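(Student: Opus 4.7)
The plan is to reformulate in the radial dual and apply the nonconvex subgradient result~\eqref{eq:nc-subgrad-rate} of~\cite{Davis2018} to the dual objective $g(y) := \max\{\radTransSup{f_j}(y),\gamma_S(y)\}$, then pull the resulting near-stationarity back to the primal via the hypograph bijection~\eqref{eq:upper-graph-bijection} and the subgradient correspondence~\eqref{eq:proximal-subgradient-formula}. The first step is to identify the Lipschitz and weak-convexity constants of $g$. Each $\radTransSup{f_j}$ is $1/R$-Lipschitz by Proposition~\ref{prop:Lipschitz} and $(1+D/R)^3L$-smooth by Corollary~\ref{cor:smoothness}, hence $(1+D/R)^3L$-weakly convex; since $B(0,R)\subseteq S$, the gauge $\gamma_S$ is convex, satisfies $\gamma_S(y)\leq \|y\|/R$, and is $1/R$-Lipschitz by subadditivity. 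A finite maximum preserves both properties with the worst constants, so $g$ is $M=1/R$-Lipschitz and $\rho=(1+D/R)^3L$-weakly convex.

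Next, I would apply~\eqref{eq:nc-subgrad-rate} with the initialization $x_0=0$, which transforms to $y_0=0$, giving $g(y_0)=1/\min_j\{f_j(0)\}$ and $\inf g=d^*=1/p^*$ by~\eqref{eq:minimizer-conversion}, so
\[
g(y_0)-\inf g = \frac{p^*-\min_j\{f_j(x_0)\}}{\min_j\{f_j(x_0)\}\,p^*}.
\]
The stepsize $\alpha_k=\epsilon/\|\zeta_k'\|^2$ matches the form required by~\eqref{eq:nc-subgrad-rate}, and the stated iteration count is exactly $\rho M^2 (g(y_0)-\inf g)/\epsilon^4$, so some $k<T$ admits a nearby $y\in\varSpace$ with $\|y-y_k\|\leq \epsilon/(2\sqrt{\rho})$ and $\dist(0,\partial_Pg(y))\leq \sqrt{\rho}\,\epsilon$. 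Setting $x:=y/g(y)$, the bijection~\eqref{eq:upper-graph-bijection} together with $g(y_k)\geq \radTransSup{f_j}(y_k)$ puts each $x_k=y_k/g(y_k)$ in $\dom f_j$ and forces $\|x_k\|\leq D$ (and likewise $\|x\|\leq D$), while $g(y)\geq d^*=1/p^*$ gives $1/g(y)\leq p^*$. A direct splitting then yields
\[
\|x-x_k\|\leq \frac{\|y-y_k\|}{g(y)}+\frac{\|y_k\|\,|g(y)-g(y_k)|}{g(y)g(y_k)}\leq \frac{\|y-y_k\|}{g(y)}\!\left(1+\frac{\|x_k\|}{R}\right)\leq p^*(1+D/R)\|y-y_k\|,
\]
and combining with $\rho=(1+D/R)^3L$ recovers the claimed $p^*\epsilon/(2\sqrt{(1+D/R)L})$ bound.

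The remaining and most delicate step is converting the dual subgradient bound into a primal supgradient bound. Any $\zeta'\in\partial_Pg(y)$ comes via~\eqref{eq:proximal-subgradient-formula} from a proximal normal $(\zeta,\delta)\in N^P_{\hypo\min\{f_j,\iota_S\}}((x,1/g(y)))$ with $\zeta'=\zeta/((\zeta,\delta)^T(x,1/g(y)))$; when $\delta\neq 0$, the vector $\nu:=-\zeta/\delta$ is precisely a proximal supgradient of $\min\{f_j,\iota_S\}$ at $x$, and an elementary manipulation gives the identity $\|\zeta'\|=\|\nu\|/(f(x)-\nu^T x)$. Solving this for $\|\nu\|$ using $\|x\|\leq D$ and $f(x)\leq p^*$ yields $\|\nu\|\leq p^*\|\zeta'\|/(1-\|\zeta'\|D)\leq p^*\sqrt{\rho}\,\epsilon/(1-\sqrt{\rho}\,\epsilon D)$, which matches the stated bound. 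The principal obstacle here is twofold: first, accounting for the fact that the hypograph of $\min\{f_j,\iota_S\}$ is an intersection rather than a single component (so the proximal normal may come from either $f_j$ or from $S$, yielding the correct interpretation of $\nu$ as a proximal supgradient of the composite); and second, handling the horizontal boundary case $\delta=0$ by a limiting argument through a sequence $(\zeta_i,\delta_i)$ with $\delta_i>0$, as in the proof of Proposition~\ref{prop:lojasiewicz}, invoking the Horizontal Approximation Theorem. The assumption $\sqrt{\rho}\epsilon D<1$ is exactly what keeps the denominator $(\zeta,\delta)^T(x,f(x))$ bounded away from zero in this inversion.
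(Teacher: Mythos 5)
Your proof follows the paper's own argument step for step: the same identification of the radial dual's Lipschitz constant $1/R$ and weak-convexity modulus $(1+D/R)^3L$, the same invocation of the weakly-convex subgradient rate~\eqref{eq:nc-subgrad-rate} with $g(y_0)-\inf g=(p^*-\min_j\{f_j(x_0)\})/(\min_j\{f_j(x_0)\}p^*)$, the same triangle-inequality estimate bounding $\|x-x_k\|$ by $p^*(1+D/R)\|y-y_k\|$, and the same passage from a small dual proximal subgradient to a small primal proximal supgradient via~\eqref{eq:proximal-subgradient-formula}. The only cosmetic differences are that you solve the scalar relation $\|\zeta'\|=\|\nu\|/(f(x)-\nu^Tx)$ for $\|\nu\|$ where the paper instead lower-bounds $|(\zeta',-1)^T(y,v)|$ directly (algebraically the same computation), you explicitly flag the horizontal $\delta=0$ edge case and invoke the Horizontal Approximation Theorem as in Proposition~\ref{prop:lojasiewicz} where the paper leaves that implicit, and your derivation produces the numerator $p^*-\min\{f_j(x_0)\}$ rather than the sign-reversed $\min\{f_j(x_0)\}-p^*$ printed in the theorem, which appears to be a typo there since the latter quantity is nonpositive.
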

	\begin{proof}
		Observe that each function in the maximum defining the radial dual~\eqref{eq:smoothing-dual} is $1/R$-Lipschitz (by Proposition~\ref{prop:Lipschitz}) and each $f^\Gamma_j$ is $(1+D/R)^3L$-smooth (by Corollary~\ref{cor:smoothness}). Then the whole radially dual objective $\max\{f^\Gamma_j(y), \gamma_S(y)\}$ is $1/R$-Lipschitz and $(1+D/R)^3L$-weakly convex.
		Hence even though our primal is not assumed to be either Lipschitz or weakly convex, these two properties occur in the radial dual due to each $f_i$ having $R(f_i)>0$ and smoothness on the level set $\{x\mid f_j(x)>0\}$ respectively.
		Then we can apply~\eqref{eq:nc-subgrad-rate} implying that whenever  $T\geq \left\lceil\frac{16(1+D/R)^3L(\min\{f^\Gamma_j(y_0)\}-d^*)}{R^2\epsilon^4}\right\rceil$, a nearby $y$ has
		\begin{align*}	
		&\min_{k< T}\left\{\|y-y_k\|\right\}\leq 2(1+D/R)^3L\epsilon\\
		&\text{ and } \dist(0,\partial_P \max\{f^\Gamma_j, \gamma_S\}(y))\leq \epsilon.
		\end{align*}
		
		First, we show the nearby radial dual solution $y$ maps to a primal solution $x=y/\max\{f^\Gamma_j(y),\gamma_S(y)\}$ that is near the primal iterates.
		Having dual distance $\|y-y_k\|\leq 2(1+D/R)^3L\epsilon$ ensures $\|x-x_k\|$ is bounded by
		\begin{align*}
		&\left\|\frac{y}{\max\{f^\Gamma_j(y),\gamma_S(y)\}}-\frac{y_k}{\max\{f^\Gamma_j(y_k),\gamma_S(y_k)\}}\right\|\\
		&\leq \frac{\|y-y_k\|}{\max\{f^\Gamma_j(y),\gamma_S(y)\}}+\left\|\frac{y_k}{\max\{f^\Gamma_j(y),\gamma_S(y)\}}-\frac{y_k}{\max\{f^\Gamma_j(y_k),\gamma_S(y_k)\}}\right\|\\
		&= \frac{\|y-y_k\|}{\max\{f^\Gamma_j(y),\gamma_S(y)\}} + \|x_k\|\left|\frac{\max\{f^\Gamma_j(y_k),\gamma_S(y_k)\}}{\max\{f^\Gamma_j(y),\gamma_S(y)\}}-1\right|\\
		&\leq \frac{\|y-y_k\|}{\max\{f^\Gamma_j(y),\gamma_S(y)\}} + \frac{D\|y-y_k\|/R}{\max\{f^\Gamma_j(y),\gamma_S(y)\}}\\
		&\leq p^*(1+D/R)\|y-y_k\|\leq 2p^*(1+D/R)^{4}L\epsilon
		\end{align*}
		where the first inequality uses the triangle inequality and the second uses the bounded primal level sets and the radially dual $1/R$-Lipschitz continuity, and the third uses that $d^*=1/p^*\in\RR_{++}$.
		
		We complete our proof by relating the stationarity of $y$ to that of $x$. Let $v=\max\{f^\Gamma_j(y),\gamma_S(y)\}$, $u=1/v$ and $\zeta'\in\partial_P\max\{f^\Gamma_j, \gamma_S\}(y)$ denote a radially dual subgradient with $\|\zeta'\|\leq \epsilon$. Then we can bound
		\begin{align*}
		(\zeta',-1)^T(y,v) \leq \|\zeta'\|\|y\| - v &\leq \epsilon\|x\|/u - 1/u \leq -(1-\epsilon D)/p^*<0
		\end{align*}
		using that $u\leq f(x_k)\leq p^*$. Note $\epi\max\{f^\Gamma_j,\gamma_S\} = \Gamma(\hypo\min\{f_j,\hat\iota_S\})$ by~\eqref{eq:upper-graph-bijection}. Then the normal $(\zeta',-1)\in N^P_{\epi\max\{f^\Gamma_j,\gamma_S\}}(y,v)$ corresponds to the primal normal
		$ (\zeta', (\zeta',-1)^T(y,v))\in N^P_{\hypo\min\{f_j, \hat\iota_{S}\}}(x,u) $ by~\cite[Proposition 5]{Grimmer2021-part1}.
		Hence $\zeta:=\zeta'/(\zeta',-1)^T(y,v)\in\partial^P\min\{f_j, \hat\iota_{S}\}(x)$ is a primal subgradient with norm at most $O(\epsilon)$ as
		\begin{equation*}
		\|\zeta\|=\left\|\frac{\zeta'}{(\zeta',-1)^T(y,v)}\right\| =\frac{\|\zeta'\|}{|(\zeta',-1)^T(y,v)|} \leq \frac{p^*\epsilon}{1-\epsilon D}. \tag*{\qed}
		\end{equation*}
	\end{proof}
}


	\begin{acknowledgements}
		The author thanks Jim Renegar broadly for inspiring this work and concretely for providing feedback an early draft and Rob Freund for constructive thoughts helping focus this work. Additionally, two anonymous referees and the associate editor provided useful feedback much improving this work's presentation and clarity.
	\end{acknowledgements}

	%
	%

	\bibliographystyle{spmpsci}      
	\bibliography{references}   

\begin{thebibliography}{10}
\providecommand{\url}[1]{{#1}}
\providecommand{\urlprefix}{URL }
\expandafter\ifx\csname urlstyle\endcsname\relax
  \providecommand{\doi}[1]{DOI~\discretionary{}{}{}#1}\else
  \providecommand{\doi}{DOI~\discretionary{}{}{}\begingroup
  \urlstyle{rm}\Url}\fi

\bibitem{Bauschke2017}
Bauschke, H.H., Bolte, J., Teboulle, M.: A descent lemma beyond lipschitz
  gradient continuity: First-order methods revisited and applications.
\newblock Mathematics of Operations Research \textbf{42}(2), 330--348 (2017).
\newblock \doi{10.1287/moor.2016.0817}.
\newblock \urlprefix\url{https://doi.org/10.1287/moor.2016.0817}

\bibitem{Beck2009}
Beck, A., Teboulle, M.: A fast iterative shrinkage-thresholding algorithm for
  linear inverse problems.
\newblock SIAM Journal on Imaging Sciences \textbf{2}(1), 183--202 (2009)

\bibitem{Beck2012}
Beck, A., Teboulle, M.: Smoothing and first order methods: A unified framework.
\newblock SIAM J. Optim. \textbf{22}, 557--580 (2012)

\bibitem{Bertero2009}
Bertero, M., Boccacci, P., Desider{\`{a}}, G., Vicidomini, G.: Image deblurring
  with poisson data: from cells to galaxies.
\newblock Inverse Problems \textbf{25}(12), 123006 (2009).
\newblock \doi{10.1088/0266-5611/25/12/123006}.
\newblock \urlprefix\url{https://doi.org/10.1088/0266-5611/25/12/123006}

\bibitem{Bolte2007}
Bolte, J., Daniilidis, A., Lewis, A.: The {\l}ojasiewicz inequality for
  nonsmooth subanalytic functions with applications to subgradient dynamical
  systems.
\newblock SIAM Journal on Optimization \textbf{17}(4), 1205--1223 (2007).
\newblock \doi{10.1137/050644641}.
\newblock \urlprefix\url{https://doi.org/10.1137/050644641}

\bibitem{Bolte2017}
Bolte, J., Nguyen, T.P., Peypouquet, J., Suter, B.W.: From error bounds to the
  complexity of first-order descent methods for convex functions.
\newblock Mathematical Programming \textbf{165}(2), 471--507 (2017).
\newblock \doi{10.1007/s10107-016-1091-6}.
\newblock \urlprefix\url{https://doi.org/10.1007/s10107-016-1091-6}

\bibitem{Burke1993}
Burke, J.V., Ferris, M.C.: Weak sharp minima in mathematical programming.
\newblock SIAM Journal on Control and Optimization \textbf{31}(5), 1340--1359
  (1993).
\newblock \doi{10.1137/0331063}.
\newblock \urlprefix\url{https://doi.org/10.1137/0331063}

\bibitem{Chandrasekaran2010}
Chandrasekaran, K., Dadush, D., Vempala, S.: Thin Partitions: Isoperimetric
  Inequalities and a Sampling Algorithm for Star Shaped Bodies, pp. 1630--1645.
\newblock \doi{10.1137/1.9781611973075.133}.
\newblock
  \urlprefix\url{https://epubs.siam.org/doi/abs/10.1137/1.9781611973075.133}

\bibitem{Clarke1998-nonsmoothanalysis}
Clarke, F.H., Ledyaev, Y.S., Stern, R.J., Wolenski, P.R.: Nonsmooth Analysis
  and Control Theory.
\newblock Springer-Verlag, Berlin, Heidelberg (1998)

\bibitem{Davis2018}
Davis, D., Drusvyatskiy, D.: Stochastic model-based minimization of weakly
  convex functions.
\newblock SIAM Journal on Optimization \textbf{29}(1), 207--239 (2019).
\newblock \doi{10.1137/18M1178244}.
\newblock \urlprefix\url{https://doi.org/10.1137/18M1178244}

\bibitem{Dorn1960}
Dorn, W.S.: Duality in quadratic programming.
\newblock Quarterly of Applied Mathematics \textbf{18}(2), 155--162 (1960).
\newblock \urlprefix\url{http://www.jstor.org/stable/43636320}

\bibitem{Fan2001}
Fan, J., Li, R.: Variable selection via nonconcave penalized likelihood and its
  oracle properties.
\newblock Journal of the American Statistical Association \textbf{96}(456),
  1348--1360 (2001).
\newblock \doi{10.1198/016214501753382273}.
\newblock \urlprefix\url{https://doi.org/10.1198/016214501753382273}

\bibitem{Frank1956}
Frank, M., Wolfe, P.: An algorithm for quadratic programming.
\newblock Naval Research Logistics Quarterly \textbf{3}(1‐2), 95--110 (1956).
\newblock \doi{https://doi.org/10.1002/nav.3800030109}.
\newblock
  \urlprefix\url{https://onlinelibrary.wiley.com/doi/abs/10.1002/nav.3800030109}

\bibitem{Freund1987}
Freund, R.M.: Dual gauge programs, with applications to quadratic programming
  and the minimum-norm problem.
\newblock Math. Program. \textbf{38}, 47--67 (1987).
\newblock \doi{10.1007/BF02591851}.
\newblock \urlprefix\url{https://doi.org/10.1007/BF02591851}

\bibitem{Gao1997}
Gao, H.Y., Bruce, A.G.: {Wave shrink with firm shrinkage}.
\newblock Statistica Sinica \textbf{7}(4), 855 -- 874 (1997)

\bibitem{Grimmer2017-radial-subgradient}
Grimmer, B.: Radial subgradient method.
\newblock SIAM Journal on Optimization \textbf{28}(1), 459--469 (2018).
\newblock \doi{10.1137/17M1122980}.
\newblock \urlprefix\url{https://doi.org/10.1137/17M1122980}

\bibitem{Grimmer2021-part1}
Grimmer, B.: {Radial Duality Part I: Foundations}.
\newblock arXiv e-prints arXiv:2104.11179 (2021)

\bibitem{Guminov2017}
{Guminov}, S., {Gasnikov}, A.: {Accelerated Methods for
  $\alpha$-Weakly-Quasi-Convex Problems}.
\newblock arXiv e-prints arXiv:1710.00797 (2017)

\bibitem{Guminov2019}
Guminov, S., Nesterov, Y., Dvurechensky, P., Gasnikov, A.: Accelerated
  primal-dual gradient descent with linesearch for convex, nonconvex, and
  nonsmooth optimization problems.
\newblock Dokl. Math. \textbf{99}, 125--128 (2019).
\newblock \doi{10.1134/S1064562419020042}

\bibitem{He2016}
He, N., Harchaoui, Z., Wang, Y., Song, L.: Fast and simple optimization for
  poisson likelihood models.
\newblock CoRR \textbf{abs/1608.01264} (2016).
\newblock \urlprefix\url{http://arxiv.org/abs/1608.01264}

\bibitem{Hinder2020}
Hinder, O., Sidford, A., Sohoni, N.: Near-optimal methods for minimizing
  star-convex functions and beyond.
\newblock In: J.~Abernethy, S.~Agarwal (eds.) Proceedings of Thirty Third
  Conference on Learning Theory, \emph{Proceedings of Machine Learning
  Research}, vol. 125, pp. 1894--1938. PMLR (2020).
\newblock \urlprefix\url{http://proceedings.mlr.press/v125/hinder20a.html}

\bibitem{Johnstone2017}
Johnstone, P.R., Moulin, P.: Faster subgradient methods for functions with
  h{\"{o}}lderian growth.
\newblock Math. Program. \textbf{180}(1), 417--450 (2020).
\newblock \doi{10.1007/s10107-018-01361-0}.
\newblock \urlprefix\url{https://doi.org/10.1007/s10107-018-01361-0}

\bibitem{Haneveld2020}
Klein~Haneveld, W.K., van~der Vlerk, M.H., Romeijnders, W.: Chance Constraints,
  pp. 115--138.
\newblock Springer International Publishing, Cham (2020).
\newblock \doi{10.1007/978-3-030-29219-5\_5}.
\newblock \urlprefix\url{https://doi.org/10.1007/978-3-030-29219-5\_5}

\bibitem{Kurdyka1998}
Kurdyka, K.: On gradients of functions definable in o-minimal structures.
\newblock Annales de l'institut Fourier \textbf{48}(3), 769--783 (1998).
\newblock \urlprefix\url{http://eudml.org/doc/75302}

\bibitem{Bach2012}
Lacoste{-}Julien, S., Schmidt, M., Bach, F.R.: A simpler approach to obtaining
  an o(1/t) convergence rate for the projected stochastic subgradient method.
\newblock CoRR \textbf{abs/1212.2002} (2012).
\newblock \urlprefix\url{http://arxiv.org/abs/1212.2002}

\bibitem{Lee2016}
Lee, J.C., Valiant, P.: Optimizing star-convex functions.
\newblock In: 2016 IEEE 57th Annual Symposium on Foundations of Computer
  Science (FOCS), pp. 603--614 (2016).
\newblock \doi{10.1109/FOCS.2016.71}

\bibitem{Liu2017}
Liu, M., Yang, T.: Adaptive accelerated gradient converging method under
  holderian error bound condition.
\newblock In: Advances in Neural Information Processing Systems, vol.~30
  (2017).
\newblock
  \urlprefix\url{https://proceedings.neurips.cc/paper/2017/file/2612aa892d962d6f8056b195ca6e550d-Paper.pdf}

\bibitem{Lojasiewicz1963}
Lojasiewicz, S.: Une propri{\'e}t{\'e} topologique des sous-ensembles
  analytiques r{\'e}els.
\newblock Les {\'e}quations aux d{\'e}riv{\'e}es partielles \textbf{117},
  87--89 (1963)

\bibitem{Lojasiewicz1993}
{\L}ojasiewicz, S.: Sur la g{\'e}om{\'e}trie semi-et sous-analytique.
\newblock In: Annales de l'institut Fourier, vol.~43, pp. 1575--1595 (1993)

\bibitem{Mukkamala2020}
Mukkamala, M.C., Fadili, J., Ochs, P.: Global convergence of model function
  based bregman proximal minimization algorithms (arXiv:2012.13161, 2020)

\bibitem{Nemirovski2007}
Nemirovski, A., Shapiro, A.: Convex approximations of chance constrained
  programs.
\newblock SIAM Journal on Optimization \textbf{17}(4), 969--996 (2007).
\newblock \doi{10.1137/050622328}.
\newblock \urlprefix\url{https://doi.org/10.1137/050622328}

\bibitem{Nesterov1983}
Nesterov, Y.: A method for unconstrained convex minimization problem with the
  rate of convergence $o(1/k^2)$.
\newblock Soviet Mathematics Doklady \textbf{27}(2), 372--376 (1983)

\bibitem{Nesterov2005}
Nesterov, Y.: Smooth minimization of non-smooth functions.
\newblock Math. Program. \textbf{103}(1), 127–152 (2005).
\newblock \doi{10.1007/s10107-004-0552-5}.
\newblock \urlprefix\url{https://doi.org/10.1007/s10107-004-0552-5}

\bibitem{Nesterov2015}
Nesterov, Y.: Universal gradient methods for convex optimization problems.
\newblock Math. Program. \textbf{152}(1-2), 381--404 (2015).
\newblock \doi{10.1007/s10107-014-0790-0}.
\newblock \urlprefix\url{http://dx.doi.org/10.1007/s10107-014-0790-0}

\bibitem{Nesterov2006}
Nesterov, Y., Polyak, B.: Cubic regularization of newton method and its global
  performance.
\newblock Math. Program. \textbf{108}, 177--205 (2006).
\newblock \doi{10.1007/s10107-006-0706-8}

\bibitem{Polyak1969}
Polyak, B.T.: Minimization of unsmooth functionals.
\newblock USSR Computational Mathematics and Mathematical Physics
  \textbf{9}(3), 14--29 (1969).
\newblock \doi{https://doi.org/10.1016/0041-5553(69)90061-5}.
\newblock
  \urlprefix\url{https://www.sciencedirect.com/science/article/pii/0041555369900615}

\bibitem{Polyak1979}
Polyak, B.T.: Sharp minima.
\newblock Institute of Control Sciences Lecture Notes,Moscow, USSR. Presented
  at the IIASA Workshop on Generalized Lagrangians and Their Applications,
  IIASA, Laxenburg, Austria.  (1979)

\bibitem{Renegar2016}
Renegar, J.: {``Efficient'' Subgradient Methods for General Convex
  Optimization}.
\newblock SIAM Journal on Optimization \textbf{26}(4), 2649--2676 (2016).
\newblock \doi{10.1137/15M1027371}.
\newblock \urlprefix\url{https://doi.org/10.1137/15M1027371}

\bibitem{Renegar2019}
Renegar, J.: Accelerated first-order methods for hyperbolic programming.
\newblock Math. Program. \textbf{173}(1-2), 1--35 (2019).
\newblock \doi{10.1007/s10107-017-1203-y}.
\newblock \urlprefix\url{https://doi.org/10.1007/s10107-017-1203-y}

\bibitem{RenegarGrimmer2018}
{Renegar}, J., {Grimmer}, B.: {A Simple Nearly-Optimal Restart Scheme For
  Speeding-Up First Order Methods}.
\newblock To appear in Foundations of Computational Mathematics  (2021)

\bibitem{Roulet2020}
Roulet, V., d'Aspremont, A.: Sharpness, restart, and acceleration.
\newblock SIAM Journal on Optimization \textbf{30}(1), 262--289 (2020).
\newblock \doi{10.1137/18M1224568}.
\newblock \urlprefix\url{https://doi.org/10.1137/18M1224568}

\bibitem{Rousseeuw1984}
Rousseeuw, P.J.: Least median of squares regression.
\newblock Journal of the American Statistical Association \textbf{79}(388),
  871--880 (1984).
\newblock \doi{10.1080/01621459.1984.10477105}.
\newblock
  \urlprefix\url{https://www.tandfonline.com/doi/abs/10.1080/01621459.1984.10477105}

\bibitem{Rubinov1986}
Rubinov, A., Yagubov, A.: {The space of star-shaped sets and its applications
  in nonsmooth optimization.}
\newblock Mathematical Programming Studies \textbf{29} (1986).
\newblock \doi{10.1007/BFb0121146}.
\newblock \urlprefix\url{https://doi.org/10.1007/BFb0121146}

\bibitem{osqp}
Stellato, B., Banjac, G., Goulart, P., Bemporad, A., Boyd, S.: {OSQP}: an
  operator splitting solver for quadratic programs.
\newblock Mathematical Programming Computation \textbf{12}(4), 637--672 (2020).
\newblock \doi{10.1007/s12532-020-00179-2}.
\newblock \urlprefix\url{https://doi.org/10.1007/s12532-020-00179-2}

\bibitem{Wen2018}
Wen, F., Chu, L., Liu, P., Qiu, R.C.: A survey on nonconvex
  regularization-based sparse and low-rank recovery in signal processing,
  statistics, and machine learning.
\newblock IEEE Access \textbf{6}, 69883--69906 (2018).
\newblock \doi{10.1109/ACCESS.2018.2880454}

\bibitem{Yang2018}
Yang, T., Lin, Q.: Rsg: Beating subgradient method without smoothness and
  strong convexity.
\newblock Journal of Machine Learning Research \textbf{19}(6), 1--33 (2018).
\newblock \urlprefix\url{http://jmlr.org/papers/v19/17-016.html}

\bibitem{Yu2014}
Yu, J., Eriksson, A., Chin, T.J., Suter, D.: An adversarial optimization
  approach to efficient outlier removal.
\newblock J Math Imaging Vis \textbf{48}, 451–466 (2014).
\newblock \doi{10.1007/s10851-013-0418-7}

\bibitem{Yuan2017}
Yuan, Y., Li, Z., Huang, B.: Robust optimization approximation for joint chance
  constrained optimization problem.
\newblock J Glob Optim \textbf{67}, 805--827 (2017).
\newblock \doi{10.1007/s10898-016-0438-0}

\bibitem{Zhang2010}
Zhang, C.H.: {Nearly unbiased variable selection under minimax concave
  penalty}.
\newblock The Annals of Statistics \textbf{38}(2), 894 -- 942 (2010).
\newblock \doi{10.1214/09-AOS729}.
\newblock \urlprefix\url{https://doi.org/10.1214/09-AOS729}

\end{thebibliography}

	\appendix
	{\color{blue}\section{LogSumExp Gradients and QP Optimality Certificates} \label{appendix}
	In our quadratic programming example~\eqref{eq:qp-smoothing} and our generalized setting~\eqref{eq:general-smoothing}, we consider smoothings of a finite maximum. Given a smooth convex functions $f_i\colon\varSpace \rightarrow \mathbb{R}$, we considered the smoothing of $\max\{f_i\}$ with parameter $\eta>0$ given by
	$ f_\eta(x) := \eta \log\left( \sum_{i=0}^n \exp(f_i(x)/\eta)\right).$
	Its gradient is given by
	$ \nabla f_\eta(x) = \sum \lambda_i \nabla f_i(x)$
	where $\lambda_i = \exp(f_i(x)/\eta) / \sum_j  \exp(f_j(x)/\eta)$. Computationally evaluating this requires mild care to avoid precision issues with exponentiating potentially larger numbers. It is numerically stable to instead compute these coefficients via the equivalent formula
	$$\lambda_i = \frac{\exp((f_i(x) - \max\{f_k(x)\})/\eta)}{\sum_j  \exp((f_j(x) - \max\{f_k(x)\})/\eta)} \ .$$
	
	Next, we specialize this formula to the setting of quadratic programming for $g_\eta$ in~\eqref{eq:qp-smoothing}. Observe the gradient of the objective component is given by
	$$ \nabla \left(\frac{c^Ty+1 + \sqrt{(c^Ty+1)^2 +2y^TQy}}{2}\right)_+\ (y)= \frac{Qx +c}{1-\frac{1}{2}x^TQx} $$
	where $x=y/\left(\frac{c^Ty+1 + \sqrt{(c^Ty+1)^2 +2y^TQy}}{2}\right)_+$  by using the gradient formula~\eqref{eq:gradient-formula}. The gradients of the transformed constraints are simply
	$ \nabla a_i^T y /b_i = a_i/b_i $.
	Then the gradient of the smoothing overall is given by
	$$\nabla g_\eta(y) = \lambda_0 \frac{Qx +c}{1-\frac{1}{2}x^TQx} + \sum_{i=1}^n \lambda_i a_i/b_i \ . $$
	This gradient can be computed using two matrix multiplications with $A$: $Ay$ is needed to compute the coefficients $\lambda_i$, then $A^T[\lambda_1/b_1 \dots \lambda_n/b_n]$ is needed for the summation above. This gradient formula indicates a reasonable selection of dual multipliers
	$ v_i = \frac{\lambda_i (1-\frac{1}{2}x^TQx)}{\lambda_0b_i}$
	as we then have $g_\eta(y)$ proportional to $Qx+c + A^Tv$.
}
\end{document}